\numberwithin{equation}{section}
\newtheorem{theorem}{Theorem}[section]
\newtheorem{claim}[theorem]{Claim}
\newtheorem{proposition}[theorem]{Proposition}
\newtheorem{fact}[theorem]{Fact}
\newtheorem{observation}[theorem]{Observation}
\newtheorem{corollary}[theorem]{Corollary}
\theoremstyle{definition}
\newtheorem{definition}[theorem]{Definition}
\newtheorem{problem}[theorem]{Problem}
\newtheorem{discussion}[theorem]{Discussion}
\newtheorem{convention}[theorem]{Convention}
\newtheorem{hypothesis}[theorem]{Hypothesis}
\theoremstyle{remark}
\newtheorem{remark}[theorem]{Remark}
\newtheorem{question}[theorem]{Question}
\newtheorem{notation}[theorem]{Notation}
\newcommand{\Ker}{{\rm Ker}}
\newcommand{\fr}{{\rm fr}}
\newcommand{\add}{{\rm add}}
\newcommand{\ZFC}{{\rm ZFC}}
\newcommand{\Eend}{{\rm End}}
\newcommand{\sfr}{{\rm sfr}}
\newcommand{\sgr}{{\rm sgr}}
\newcommand{\Ord}{{\rm Ord}}
\newcommand{\BB}{{\rm BB}}
\newcommand{\ord}{{\rm ord}}
\newcommand{\Hom}{{\rm Hom}}
\newcommand{\bd}{{\rm bd}}
\newcommand{\gr}{{\rm gr}}
\newcommand{\id}{{\rm id}}
\newcommand{\tdu}{TDU}
\newcommand{\Rang}{{\rm Im}}
\newcommand{\rest}{{\restriction}}
\newcommand{\dom}{{\rm dom}}
\newcommand{\rng}{{\rm Im}}
\newcommand{\Wilog}{{\rm Without loss of generality}}
\newcommand{\then}{{\underline{then}}}
\newcommand{\when}{{\underline{when}}}
\newcommand{\Iff}{{\underline{iff}}}
\newcommand{\mn}{{\medskip\noindent}}
\newcommand{\sn}{{\smallskip\noindent}}
\newcommand{\cF}{{\mathscr F}}
\newcommand{\bbN}{{\mathbb N}}
\newcommand{\cM}{{\mathscr M}}
\newcommand{\bbZ}{{\mathbb Z}}
\newcommand{\gx}{{\mathfrak x}}
\newcommand{\cU}{{\mathscr U}}
\newcommand{\cf}{{\rm cf}}
\newcommand{\pr}{{\rm pr}}
\def\mathunderaccent#1#2 {\let\theaccent#1\skewfactor#2
\mathpalette\putaccentunder}
\def\putaccentunder#1#2{\oalign{$#1#2$\crcr\hidewidth
\vbox to.2ex{\hbox{$#1\skew\skewfactor\theaccent{}$}\vss}\hidewidth}}
\begin{document}

\title[trivial duality conjecture ] {Quite free $p$-groups with trivial duality}

\author[M. Asgharzadeh]{Mohsen Asgharzadeh}

\address{Mohsen Asgharzadeh, Hakimiyeh, Tehran, Iran.}

\email{mohsenasgharzadeh@gmail.com}

\author[M.  Golshani]{Mohammad Golshani}

\address{Mohammad Golshani, School of Mathematics, Institute for Research in Fundamental Sciences (IPM), P.O.\ Box:
	19395--5746, Tehran, Iran.}

\email{golshani.m@gmail.com}

\author[S. Shelah]{Saharon Shelah}

\address{Saharon Shelah, Einstein Institute of Mathematics, The Hebrew University of Jerusalem, Jerusalem,
	91904, Israel, and Department of Mathematics, Rutgers University, New Brunswick, NJ
	08854, USA.}

\email{shelah@math.huji.ac.il}
\thanks{The second author’s research has been supported by a grant from IPM (No. 1403030417). The
second author’s work is based upon research funded by Iran National Science Foundation (INSF) under
project No. 4027168. The third author research partially supported by the Israel Science Foundation
(ISF) grant no: 1838/19, and Israel Science Foundation (ISF) grant no: 2320/23; Research partially
supported by the grant “Independent Theories” NSF-BSF, (BSF 3013005232). This is 1045a in Shelah’s
list of publications.}

\subjclass[2010]{ Primary:  03E75; 20K10, Secondary: 20A15; 13L05; 20K40}

\keywords {Almost free groups;  trivial duality problem; set theoretical methods in algebra; relative trees;
	$p$-groups; small morphism.
}



\begin{abstract}
We present a class of abelian groups that exhibit a high degree of freeness while possessing no non-trivial homomorphisms to a canonical free object. Unlike prior investigations, which primarily focused on torsion-free groups, our work broadens the scope to include groups with torsion.
Our main focus is on  $p$-groups, for which we formulate and prove the \emph{Trivial Duality Conjecture}.
Key tools in our analysis include the multi black box method and the application of specific homological properties of relative trees.
\end{abstract}

	\date{\today}
\maketitle
\medskip

\section {Introduction} \label{0}

This paper addresses the \emph{Trivial Duality Conjecture}, mainly for torsion abelian groups. Specifically, we are concerned with the following folklore problem and its innovative resolution:
\begin{problem}
	Given an infinite cardinal $\mu$, is there a $\mu$-free abelian group $G$ such that $\Hom(G, \mathbb{Z}) = 0$?
\end{problem}

We denote this \emph{trivial dual property} by $\tdu_{\mu} $ when $\mu > \aleph_0$. There are a lot of works over abelian groups. Here  is a short list. Recall that much earlier results, like the existence of an $\aleph_1$-free abelian group $G$ of cardinality $\aleph_1$ with $\Hom(G, \mathbb{Z}) = 0$, were established by Eda \cite{eda} and Shelah \cite{Sh:105}.
 The existence of such groups was known classically for $\aleph_1$-free abelian groups, but remained widely open for many years for $\aleph_n$-free abelian groups, where $n > 1$. This was finally answered affirmatively in \cite{Sh:883}, where examples using $n$-dimensional black boxes were introduced. Subsequently, these were used for more complicated algebraic relatives in G\"obel-Shelah \cite{Sh:920}.
In \cite{Sh:898}, Shelah introduced several close approximations to proving in ZFC some almost positive results for $\aleph_\omega$, that is $\tdu_{\aleph_\omega}$, using 1-black boxes. In  his landmark paper \cite{Sh:1028}, Shelah finally proved that $\tdu_{\aleph_{\omega}}$, and indeed $\tdu_{\aleph_{\omega_1 \cdot k}}$ holds for all $k < \omega$. Furthermore, assuming the existence of large cardinals, he showed that $\tdu_{\aleph_{\omega_1 \cdot \omega}}$ can consistently fail. This demonstrates that $\lambda = \aleph_{\omega_1 \cdot \omega}$ is the first cardinal for which $\tdu_\lambda$ cannot be proved in ZFC.
Despite a lot of works over abelian groups, the trivial duality problem was largely restricted to torsion-free groups. This inspires us to continue exploring around Problem 1.1. In particular, we address the following natural problem:

\begin{problem}\label{p1.2}
How can one extend $\tdu $ to not necessarily torsion-free groups?
\end{problem}

The structure theorem for torsion groups states that every torsion group can be uniquely decomposed into a direct sum of its $p$-primary components for each prime $p$, where each $p$-primary component consists of elements whose orders are powers of $p$. This means that to understand a torsion group, we can focus on understanding $p$-groups (groups where each element has
an order that is a power of $p$) for different primes $p$.

\begin{hypothesis}
Assume $0<\mathbf{k}<\omega$ and
let  $\bar\partial=\langle \partial_\ell=\partial(\ell):\ell
< \bold k\rangle$ be a sequence of regular cardinals. \begin{enumerate}
	\item[$(a)$]  Let $\bar S = \langle S_m : m < \bold k \rangle$, where each $S_m$ is a set (of ordinals),
	
	\item[$(b)$] $\Lambda \subseteq \bar S^{[\bar\partial]}$,
	
	\item[$(c)$] $\bar J = \langle J_m : m < \bold k \rangle$, where each $J_m$ is an ideal on $\partial_m$.
\end{enumerate}
\end{hypothesis}

To carry out our constructions in $\ZFC$, we need some combinatorial principles introduced by Shelah  \cite{Sh:227}, known as black boxes,
where he showed that they follow from ZFC (here, ZFC means the Zermelo--Fraenkel set theory with the axiom of choice). The first difficulty is to reformulate quite-free for torsion groups. To handle this, we rely extensively on techniques from algebra and set theory, with a particular focus on the use of a version of black box called the $\bar{\chi}$-black box.
To ensure the paper is self-contained, we review and extend a list of key elements from it, including:

\begin{itemize}
    \item[$\circ$] The combinatorial $\bar{\partial}$-parameter $\mathbf{x}:= (\bold k, \bar\partial, \bar S, \Lambda, \bar J) $.

    \item[$\circ$] The \emph{$\bar{\chi}$-black box} (see Definition \ref{a99}).
    \item[$\circ$] The \emph{module parameter} $\mathbf{d}$. This consists of a tuple
    \[
    \mathbf d= \langle  R, M^*, \cM, \theta      \rangle= \langle  R_{\mathbf d}, M_{\mathbf d}^*, \cM_{\mathbf d}, \theta_{\mathbf d}      \rangle
    \]
    where
     $R$ is a ring,    $M^*$ is a fixed $R$-module,
       $\cM$ is a set or class of $R$-modules, and   $\theta\geq \aleph_0$ is a regular cardinal.
    \item[$\circ$] The \emph{$\mathbf{d}$-problem} $\Xi$ which  is a set $\Xi$   of triples of the form $(G,H,h)$ consisting of $R$-modules $G$ and $H$,  and a nonzero homomorphism  $h\in \Hom_R(G,H)$.
\end{itemize}

These elements provide a solid foundation for applying Shelah's method effectively within our proofs and arguments. In Definition \ref{d691n}, we illustrate how to utilize the preceding list to construct a relatively free module with trivial duality, referred to as the $(R, \mathbf{x})$-construction $\gx$. The initial step involves employing the black box method to determine under what conditions $\mathbf{x}$ is equipped with the $\theta$-fitness. For its definition, see Definition \ref{d60n}.

Finally, we offer a solution to the trivial duality problem by extending and simplifying the existing framework \cite{Sh:1028}.
To this end, it may be worth highlighting the following technical construction. Namely, let $G_*$ be an abelian group equipped with a nonzero morphism $h \in \Hom(G_*, \mathbb{Z})$. Shelah \cite{Sh:1028} constructed an abelian group extension $G$ of $G_*$ such that $h$ cannot be extended to $\Hom(G, \mathbb{Z})$, and he implicitly  asked the following variation of
Problem \ref{p1.2}:\begin{question}\label{q14}
For  given abelian groups $G_*$, $H_*$ and  a non-zero homomorphism $h_\ast:G_*\to H_\ast$, is it possible to construct  a group extension
$G \supseteq G_*$
such that
$h_\ast$ cannot be extended to the whole group
$G$? \end{question}
Recall that
\cite[Claim 2.12]{Sh:1028}
provides a situation in which
$$\xymatrix{
	& 0 \ar[r]&Rz\ar[r]^{\subseteq}\ar[d]_{h_\ast}&G\ar[dl]^{\nexists h }\\
	&& R,
	&&&}$$where $h_\ast:Rz\to R$ is given by the assignment $z\to1\in R$ for a   distinguished  element $z$. Here,  is precisely described   the text presented as the main result of Section 3:

\begin{theorem}\label{1.3}
	Let $\mathbf x$ be a $\mathbf k$-combinatorial
	$\bar\partial$-parameter, $R=R_{\mathbf x}$ and  let  $(\chi, \bold d, \Xi)$ be a module problem. Suppose $\mathbf x$
	$\theta$-fits the triple $(\chi,\mathbf d,\Xi),\chi^+ \ge \theta + |R|^+$ and $\mathbf x$
	has $\chi$-black box. The following assertions are valid:
	\begin{enumerate}
		\item There is an $(R,\mathbf x)$-construction $\gx$ such that:
		\begin{enumerate}
			\item[(a)]    $G = G_{\gx}$ is an $R$-module of cardinality
			$|\Lambda_{\mathbf x}|$,
			
			\item[(b)]  if $(G_*,H_*,h_*) \in \Xi,$ and  $h_0 \in
			\Hom_R(G_*,G)$ is an embedding, then there is no $h_1 \in \Hom_R(G ,H_*)$ such that
			$h_0 \circ h_1 = h_*$: $$\xymatrix{
				& 0 \ar[r]&G_*\ar[r]^{h_0}\ar[d]_{h_*}&G\ar[dl]^{\nexists h_1}\\
				&& H_*
				&&&}$$
		\end{enumerate}
		\item Suppose in addition to the first item, $\mathbf x$ freely $\theta$-fits  the triple $(\chi,\mathbf d,\Xi)$.
		Then we can add the following two properties:
		
		\begin{enumerate}
			\item[(c)]  $G$ is $\sigma$-free if $\mathbf x$ is $\sigma$-free,
			
			\item[(d)] $\Hom_R(G,H_{*}) = 0$, for all $H_*$ such that $(G_*,H_*,h_*) \in \Xi$.
		\end{enumerate}
	\end{enumerate}
\end{theorem}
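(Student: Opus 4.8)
The plan is to construct the module $G = G_{\gx}$ as a direct limit of an increasing continuous chain of ``approximations'' indexed by the black box, exactly in the spirit of the standard prediction-and-killing arguments. First I would set up the free base: let $G^0$ be the free $R$-module generated by the elements supplied by the combinatorial parameter $\mathbf x$ (one generator per node of the relevant relative trees encoded in $\Lambda_{\mathbf x}$), so that at the bottom the cardinality is $|\Lambda_{\mathbf x}|$ as required by (a). The $\chi$-black box then hands us a list $\langle (\bar\eta_\alpha, \varphi_\alpha, c_\alpha) : \alpha < \alpha^* \rangle$ of ``traps'': each candidate situation $(G_*, H_*, h_*) \in \Xi$ together with an embedding $h_0 : G_* \hookrightarrow G$ and a putative lift $h_1 : G \to H_*$ is guessed, on a club of initial segments, by one of these traps (this is precisely what $\theta$-fitness of $\mathbf x$ for $(\chi, \mathbf d, \Xi)$ is designed to guarantee, via Definition \ref{d60n}). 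At stage $\alpha$ we adjoin one new element $y_\alpha$ and impose a divisibility/summation equation of the form $y_\alpha = \sum_n r_n x_{\eta_\alpha(n)} + (\text{correction in } G^\alpha)$ chosen so that: if the guessed $h_1$ were genuinely a homomorphism extending $h_*$ along $h_0$, then applying $h_1$ to this equation would force $h_*$ to take a value incompatible with the one already determined on $G_*$ — so no such $h_1$ exists. This gives (b).

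Next I would verify (a) in full: the chain has length $|\Lambda_{\mathbf x}|$ and at each step we add a single generator, so $|G| = |\Lambda_{\mathbf x}|$, using $\chi^+ \ge \theta + |R|^+$ to control the bookkeeping. For part (2), the point of ``freely $\theta$-fits'' is that the equations imposed at successor stages are themselves of a ``free'' shape — the new element together with the relation generates, modulo $G^\alpha$, a free module on a branch — so that the whole chain remains $\sigma$-free whenever the underlying $\mathbf x$ is $\sigma$-free; this is where the homological properties of relative trees alluded to in the abstract do the work, and it yields (c). For (d), one runs the same trap argument but now, thanks to free fitting, every pair $(0, H_*, h)$ with $h \ne 0$ — equivalently every nonzero $h \in \Hom_R(G, H_*)$ — gets caught: more precisely, \emph{any} nonzero homomorphism $h : G \to H_*$ restricts on a club to one of the guessed maps, and the stage-$\alpha$ equation was arranged to be unsolvable in $H_*$ under $h$, contradiction; hence $\Hom_R(G, H_*) = 0$.

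The main obstacle — and the reason the hypotheses are stated the way they are — is the combinatorial heart: showing that the $\chi$-black box really does catch \emph{every} relevant triple, with enough room at the catching stage to impose an equation that (i) is consistent with everything built so far, (ii) genuinely obstructs the guessed lift, and (iii) does not destroy $\sigma$-freeness. This is a delicate interplay: the trap must see a large enough initial segment of the map to pin down its behavior on the support of the branch $\bar\eta_\alpha$, yet the support must be ``new enough'' relative to $G^\alpha$ that we are free to choose the correction term. Handling the torsion case ($p$-groups) rather than the torsion-free case means the equations are divisibility-by-$p^k$ conditions rather than $\mathbb Z$-linear ones, so one must check the relative trees carry the right $p$-divisibility structure; but structurally this is absorbed into the definition of $\mathbf x$ $\theta$-fitting $(\chi, \mathbf d, \Xi)$, so in the proof of Theorem \ref{1.3} itself it enters only as a black box (literally and figuratively). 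A secondary technical point is the continuity of the chain at limit stages and the verification that $G_*$ embeds as a \emph{pure}-enough submodule via $h_0$ for the obstruction to bite — this I would handle by the usual argument that the guessed embedding, being caught on a club, agrees with the true one on a cofinal set and hence everywhere.
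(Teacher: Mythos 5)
Your overall architecture for part (1) matches the paper's: the $\chi$-black box predicts the data $(G_*,H_*,h_*,h_0,h_1)$ along branches $\bar\eta\in\Lambda_{\mathbf x}$, and at each predicted configuration one replaces the ``node module'' $G^0_{\bar\eta}$ by an extension $G^1_{\bar\eta}$ to which the guessed map cannot be extended. Two corrections, though. First, you conflate the two hypotheses: the \emph{prediction} (``every candidate is guessed'') comes from the $\chi$-black box of Definition \ref{a99}, not from $\theta$-fitness; clause (B) of Definition \ref{d60n} is what supplies the obstructing extension $G_1\supseteq G_0$. Relatedly, in this abstract theorem you cannot ``adjoin $y_\alpha$ and impose a divisibility equation $y_\alpha=\sum_n r_n x_{\eta_\alpha(n)}+\cdots$'': here $\mathbf d$ is an arbitrary module parameter, and the entire point of the framework is that the killing step is invoked abstractly from Definition \ref{d60n}(B). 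You half-acknowledge this, but your stage-by-stage equation-imposing chain is not the construction the hypotheses support; the paper instead builds $G$ as an $(R,\mathbf x)$-construction in which each branch $\bar\eta$ carries its own module $G_{\bar\eta}$ and $G/G_*$ is the direct sum of the $(G_{\bar\eta}+G_*)/G_*$.

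The genuine gap is part (d). You claim that ``any nonzero $h:G\to H_*$ restricts on a club to one of the guessed maps,'' but the black box only predicts along the branches $\bar\eta$, and the triples in $\Xi$ have prescribed domains $G_*$; nothing in your construction guarantees that an arbitrary nonzero $h\in\Hom_R(G,H_*)$ is nonzero on some embedded copy of some $G_*$ from $\Xi$ in a way that reproduces one of the killed configurations. The paper closes this by a separate transfinite construction: an increasing continuous chain $\langle M_i:i\le\theta\rangle$ in which, at each successor step $j+1$, \emph{all} tuples $(G_*,H_*,h_*,g,M)$ with $M\subseteq_R M_j$ and $g:M\cong G_*$ are enumerated and each is killed using the (free) fitness property, and then $G:=M_\theta$; regularity of $\theta$ together with $\theta>\|G_*\|$ ensures that any nonzero $h$ is already nonzero on some $M_{j,\alpha}$, which was killed at stage $j+1$. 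Without some such exhaustive enumeration over all small submodules (not just branch supports), your argument for (d) does not go through. Your sketch of (c) is fine in spirit --- it is exactly what ``freely $\theta$-fits'' together with $\sigma$-freeness of $\mathbf x$ is for, and the paper simply cites \cite[Claim 2.12]{Sh:1028} at that point.
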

\sn
Our additional contributions can be summarized as follows:

\begin{enumerate}
    \item[$(a)$] It is preferable to construct $G$ with a predetermined $\Eend(G)$, but we will delay this construction until a forthcoming paper \cite{ags2}.
    \item[$(b)$] There are relatives of the condition ``$\Hom(G,\mathbb{Z})=0$'' that apply to certain classes of abelian groups where the usual notion of freeness does not apply, such as the class of  abelian $p$-groups,  denoted by $\mathbf{K}_p$, where $p$ is a prime number.
    \item[$(c)$] We also consider the restriction of the class
    $\mathbf{K}_p$ to reduced and separable objects. This class is denoted by  $\mathbf{K}_p^{[rs]}$, where
    we compute relatives of the ``trivial duality condition''.

    \item[$(d)$]   It is desirable to provide proofs in a manner that will be clear to set-theoretically minded algebraists, demonstrating how to apply these results to various algebraic questions.
\end{enumerate}

\noindent

In Definition \ref{d542n}, we present the new concept of relative freeness and almost freeness with respect to a suitable module parameter. We pay special attention to the module parameter $$\mathbf d^1_p:=\langle  R:=\bbZ, M^*:=0, \cM, \theta:= \aleph_1    \rangle,$$  where  $\cM =\{\bbZ/p^n \bbZ:n=1,2,\ldots\}$.
Also, we invent the $\mathbf d^1_p$-problem
 $\Xi^1_p,$ by looking at the class of triples $(G_*,H_*:=G_*,h_*:=\id_{G_*})$  where $G_*$ is of the form
		$$G_*:=\bigoplus\{G^*_{m, \alpha}: m < \bold{k}, \alpha < \omega_1     \},$$
		and $G^*_{m,\omega\alpha+n} \cong \mathbb{Z} / p^{n+1}\mathbb{Z}$.
The main technical task of this paper is to find
almost-free frames with respect to $\mathbf{K}_p$. The following is our second main result:

\begin{theorem}\label{1.4}
Let
$J =J_{ \aleph_1}^{\bd}$ be the ideal of bounded subsets of $\omega_1$. Then $(\aleph_1, J)$
fits the triple $(\aleph_1, \mathbf d^1_p, \Xi^1_p)$.
\end{theorem}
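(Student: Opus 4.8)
The plan is to unwind Definition~\ref{d60n} in the degenerate case $\mathbf{k}=1$: here $\mathbf x=(\aleph_1,J)$ stands for the combinatorial parameter with $\bar\partial=\langle\aleph_1\rangle$, $\bar J=\langle J\rangle$, $J=J^{\bd}_{\aleph_1}$, together with the canonical choice of $\bar S$ and $\Lambda$, while the module side is $R=\bbZ$, $M^*=0$, $\cM=\{\bbZ/p^n\bbZ:1\le n<\omega\}$, $\theta=\chi=\aleph_1$, and $\Xi=\Xi^1_p$. One then checks each clause of ``$\theta$-fits'' (and afterwards of ``freely $\theta$-fits'', which is what feeds the $\sigma$-free conclusion of Theorem~\ref{1.3}) against this concrete data. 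The purely structural clauses are cheap and reduce to elementary facts about $\omega_1$: since $\omega_1$ is regular, $J^{\bd}_{\aleph_1}$ is a proper, $\aleph_1$-complete ideal containing the finite sets, with completeness exactly $\partial_0=\aleph_1$, and every bounded subset of $\omega_1$ is countable; this is precisely the regularity input the definition demands, and it also makes the hypothesis ``$\mathbf x$ is $\sigma$-free'' available for all $\sigma\le\aleph_1$.

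The substantive content is the ``trapping'' clause. Given $(G_*,G_*,\id_{G_*})\in\Xi^1_p$, so $G_*=\bigoplus\{G^*_{m,\alpha}:m<\mathbf{k},\alpha<\omega_1\}$ with $G^*_{m,\omega\alpha+n}\cong\bbZ/p^{n+1}\bbZ$, and given a homomorphism $g$ from a legal sub-approximation $Y$ — built from (a copy of) $G_*$ together with the first-level branch generators supplied by $\mathbf x$ — into $G_*$, one must exhibit a ``next'' generator $z$, attached to a branch $\eta\in\Lambda$ by one relation of telescoping type $p^{k(i)}x^\eta_{i+1}=x^\eta_i+(\text{element of }Y)$, so that $g$ admits no extension $Y+\bbZ z\to G_*$. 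The mechanism is the numerology forced by $\Xi^1_p$: every element of $G_*$ lies in a finite sub-sum and has order a power of $p$, so $G_*$ has no nonzero element that is $p^k$-divisible for all $k$ ``relative to a fixed finite set of coordinates''. Choosing the exponents $k(i)$ in the branch relations so that consistency of $g$ along the branch would force exactly such unbounded relative divisibility produces the obstruction, and one then verifies that, as $\eta$ ranges over $\Lambda$ and $g$ over the possible partial homomorphisms, the family of traps so produced is rich enough to satisfy the demand of Definition~\ref{d60n}.

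For ``freely $\theta$-fits'' one additionally checks that amalgamating $G_*$ with any countably many of the branch generators again yields an object of the relative-free class of $\mathbf d^1_p$, essentially a direct sum of cyclic $p$-groups. This is exactly where $J=J^{\bd}_{\aleph_1}$ is the right ideal: countably many branches from $\Lambda$ pairwise agree below any fixed countable ordinal only on a bounded, hence $J$-small, set, so the telescoping relations can be diagonalised onto a single $\bbZ$-basis modulo finite $p$-torsion, and a relative Pontryagin/Hill-type criterion identifies the result. This also pins the value in ``$G$ is $\sigma$-free'' to $\sigma=\aleph_1$, yielding the $\aleph_1$-free $p$-group promised in the introduction.

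The main obstacle I expect is the simultaneous fulfilment of the two demands on $z$: it must enter ``freely'' — a new basis element carrying a single relation to the already-built module, so as not to spoil almost-freeness — yet that single relation has to defeat every value $g$ could assign to $z$ in $G_*$. Making the divisibility bookkeeping robust, so that the constraints inherited from the branch relations cannot all be met inside any member of $\cM$ (hence inside any finite sum thereof, hence inside $G_*$), uniformly over the countably many threats the black box will later present, is the technical heart of the argument; it is also what forces the choice $G^*_{m,\omega\alpha+n}\cong\bbZ/p^{n+1}\bbZ$ of unbounded exponents rather than a fixed one.
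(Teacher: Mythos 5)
Your structural reading of Definition \ref{d60n} and the general shape of the construction (branch generators attached to $G_*$ by telescoping relations, with the obstruction coming from the bounded orders of the cyclic summands) match the paper, but the engine of your non-extension argument is not the one the paper uses, and as set up it does not close. You propose a single-branch mechanism: consistency of $g$ along one branch should force an element of $G_*$ to be ``$p^k$-divisible for all $k$ relative to a fixed finite set of coordinates''. In a bare single-branch construction (relations $p y_{n+1}=y_n-\sum_m x_{m,\omega\alpha+n}$ and $p^{n+1}y_n=0$, nothing shared between branches) this does work, since $h$ is pinned to the identity on the $x$'s and $h(y_0)$ would have to be the $p$-adic limit of the partial sums $\sum_{i\le n}p^i\sum_m x_{m,\omega\alpha+i}$, an element of infinite support. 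But that construction fails exactly the freeness demands you impose in your third paragraph: the quotient over the base acquires an element of infinite height, so $G_1$ is not free over the relevant co-$J$-small sub-sums and no ``diagonalisation onto a single basis'' is available. The paper resolves the tension you flag as ``the main obstacle'' by inserting shared node generators $y^2_{\rho_\alpha\restriction n}$ (one per initial segment of a branch) into the telescoping relations $(*)^1_{\alpha,n}$; these restore freeness across branches, but they also destroy the single-branch argument, because the unknown values $\hat h(y^2_{\rho_\alpha\restriction n})$ can absorb all divisibility constraints along any one branch.

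The missing idea is how the contradiction is then extracted: a two-branch comparison obtained from Fodor's lemma on $\omega_1$. One presses down twice --- first on $\beta\mapsto n_\beta$, the bound on the support of $\tilde h_\beta(y^1_{\beta,0})$ inside $L_\beta$, then on $\beta\mapsto\rho_\beta\restriction(n_*+1)$ --- to get a stationary set on which $n_*$ and the initial segment $\rho$ are constant. For $\beta<\alpha$ in this set the shared $y^2$-terms cancel when the two telescoped identities are subtracted, leaving $p^{n_*+1}\tilde h(y^1_{\beta,n_*+1}-y^1_{\alpha,n_*+1})=-p^{n_*}(x_{m,\beta+n_*}-x_{m,\alpha+n_*})$, whose right-hand side is nonzero while the left-hand side lands in a summand annihilated by $p^{n_*+1}$. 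This stationarity argument is the technical heart of the proof and the reason the statement concerns $J^{\bd}_{\aleph_1}$ on the uncountable regular $\omega_1$ (contrast Proposition \ref{d31}, where the $\aleph_0$ case is handled by a quite different $p$-adic completion argument); your proposal never invokes stationarity or any substitute for it. Two smaller points: the theorem and the class $\Xi^1_p$ are stated for arbitrary $0<\mathbf k<\omega$, not only $\mathbf k=1$; and the paper proves only ``fits'' here, not ``freely fits'', so your final paragraph claims more than the theorem asserts and more than this construction delivers.
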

Section 4 is focused on proving the above central theorem,  concerning the duality of separable $p$-groups. Also, we present a
 pair $(\aleph_0, J)$ that freely fits the triple  $\mathbf d^0_p$ with $\Xi^0_p$ being the class of triples $(G,H,h)$ where:
\begin{enumerate}
\item[$-$]    $G \in \mathbf {K}_p^{[rs]}$ has cardinality $\le
2^{\aleph_0}$ and is not torsion-complete,
\item[$-$]   $H$ has the form $\bigoplus\{G_n:n \in \cU\}$ where $\cU
\subseteq \bbN$ is infinite and $G_n \cong \bbZ/p^n \mathbb{Z}$,
\item[$-$] $h$ is a non-small homomorphism from $G$ to $H$.
 	\end{enumerate}
 Namely, we present the following observation:

\begin{observation}\label{1.5}
	Let
	$J =J^{\bd}_{\aleph_0 }$ be the ideal of bounded subsets of $\omega$. Then
	$(\aleph_0, J)$ freely fits the triple $(\aleph_0, \mathbf d^0_p, \Xi^0_p)$.
\end{observation}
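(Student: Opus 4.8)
The plan is to unwind the definition of ``freely $\theta$-fitting'' — the strengthening of Definition~\ref{d60n} used in Theorem~\ref{1.3}(2) — for the concrete instance where the combinatorial datum is $(\aleph_0,J)$ with $J=J^{\bd}_{\aleph_0}$ the Fr\'echet ideal on $\omega$, where $\chi=\theta=\aleph_0$, where the module parameter is the evident $\aleph_0$-analogue $\mathbf d^0_p=\langle \bbZ,\,0,\,\{\bbZ/p^n\bbZ:1\le n<\omega\},\,\aleph_0\rangle$ of $\mathbf d^1_p$, and where $\Xi=\Xi^0_p$. This splits the verification into three parts: (I) the cardinal/bookkeeping clauses, which here are immediate, since $R=\bbZ$ gives $\chi^+=\aleph_1\ge\theta+|R|^+=\aleph_1$ and $M^*=0$ renders trivial every clause that refers to realizing types or solving equations over $M^*$; (II) a purely combinatorial clause, that $(\aleph_0,J)$ carries the freeness demanded by the ``free'' version of fitting; and (III) the genuine algebraic content, that for each $(G_*,H_*,h_*)\in\Xi^0_p$ the prescribed blocking of extensions of $h_*$ can be carried out along the branches supplied by the $\bar\chi$-black box, and carried out without spoiling relative freeness.

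For (II): since $J$ is the Fr\'echet ideal, the relevant relative trees have height $\omega$ and ``$J$-positive'' just means ``infinite''; any finite configuration of nodes or branches can be pairwise separated, and because $\aleph_0$ is the least infinite cardinal the freeness clause only ever constrains finite subconfigurations, each of which is trivially completable inside an $\omega$-branching tree. So $(\aleph_0,J)$ is (vacuously) free. This is the combinatorial shadow of the fact that, by Pr\"ufer's theorem, every countable reduced separable $p$-group is already $\Sigma$-cyclic, i.e.\ relatively free with respect to $\mathbf d^0_p$, so at the bottom level the ``free'' enhancement costs nothing.

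The heart is (III). Fix $(G_*,H_*,h_*)\in\Xi^0_p$: so $G_*\in\mathbf K_p^{[rs]}$ is reduced, separable and \emph{not} torsion-complete, $|G_*|\le 2^{\aleph_0}$, $H_*=\bigoplus_{n\in\cU}\bbZ/p^n\bbZ$ with $\cU$ infinite, and $h_*\colon G_*\to H_*$ is \emph{non-small}. Following \cite{Sh:1028}, for each branch $\bar\eta$ together with a partial guess $\hat h$ at an extension of $h_*$ that the black box presents, I would adjoin to $G_*$ a new sequence $\langle x_n:n<\omega\rangle$ tied to $\bar\eta$ by relations of the prototypical shape $p\,x_{n+1}=x_n+g_n$, the $g_n\in G_*$ being chosen by Pierce's non-smallness criterion: fix $k$ with $h_*(p^n G_*[p^k])\ne 0$ for all $n$ and pick the $g_n$ so that the partial sums $s_m=\sum_{j<m}p^j h_*(g_j)$ form a Cauchy sequence in $H_*$ with \emph{no} limit in $H_*$ (its limit lives only in the torsion-completion of $H_*$). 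Then any $\hat h$ extending $h_*$ over $\langle x_n\rangle$ satisfies $\hat h(x_0)+s_m\in p^m H_*$ for every $m$, hence is forced to equal $-\lim_m s_m$, which is not an element of $H_*$ — a contradiction, so $h_*$ does not extend. The freedom to choose the branch $\bar\eta$ (equivalently, the ``missing limit'' $x_0$) is precisely what ``$G_*$ not torsion-complete'' buys: the $p$-adic completion $\overline{G_*}$ has $\overline{G_*}/G_*\ne 0$ divisible, so $G_*$ carries many non-convergent Cauchy sequences $\langle\sum_{j<m}p^j g_j\rangle_m$ along which to hang the $x_n$, enough for the black box to diagonalise against all guesses $\hat h$.

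The main obstacle will be making the blocking \emph{exhaustive and free at the same time}: for the ``freely fits'' conclusion one must kill \emph{every} non-small homomorphism out of the eventual module — not merely those the naive construction happens to defeat — so the guesses $\hat h$ fed in by the black box must be comprehensive; and the adjunction along $\bar\eta$ must be performed along a sufficiently generic (``free'') branch so that the resulting extension stays inside $\mathbf K_p^{[rs]}$ and stays relatively free. Both are handled by keeping the supports of the adjoined branches almost disjoint — again exploiting that $J$ is Fr\'echet — and by a bookkeeping over the $\le 2^{\aleph_0}$ relevant pairs (triple, guess); checking that this bookkeeping closes, i.e.\ that the $\aleph_0$-level black box is strong enough to enumerate everything in time, is the real work, and it amounts to transcribing, with minor simplifications, the corresponding verification in \cite{Sh:1028} and in the proof of Theorem~\ref{1.4}.
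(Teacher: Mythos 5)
Your core blocking argument --- using non-smallness to produce a Cauchy sequence of images in $H_*$ with no limit there, and adjoining to $G_*$ the corresponding $p$-adic limit point so that any extension of $h_*$ is forced to send it to the nonexistent limit --- is exactly the paper's argument, which packages the same construction via the completion $\hat{G_*}$ and a cardinality count $|\rng(\hat h_*)|=2^{\aleph_0}>|H_*|$ rather than explicit generators and relations $p\,x_{n+1}=x_n+g_n$. The only caveat is that the statement here is purely local (produce one $G_1$ for one triple $(G_*,H_*,h_*)$), so your closing paragraph about exhaustive black-box bookkeeping belongs to Theorem \ref{1.3} rather than to this observation; like the paper, you leave the ``freely'' clause resting on the fact that countable reduced separable $p$-groups are direct sums of cyclics, which is acceptable at this level of detail.
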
 The next part of Section 4 is closely linked to Theorem \ref{1.3}. In particular, it applies Theorem  \ref{1.4} and its relevant Observation \ref{1.5} to explore connections with small morphisms and the concept of almost relative freeness. This part provides examples to illustrate how the structural insights from Theorem \ref{1.3} manifest in these specific settings, showing how abstract set-theoretic results translate into concrete algebraic contexts:

\begin{corollary}
There is an abelian  group $G$ equipped with the following two properties:
	
	\begin{enumerate}
		\item[$(a)$]  if $(G_*, H_*, h_*) \in \Xi^0_p$,  then  every $h \in \Hom(G,H_*)$ is small.
		
		\item[$(b)$]  $G$ is $\aleph_{\omega \cdot \mathbf k}$-free with respect to $\mathbf {K}_p$.
	\end{enumerate}
\end{corollary}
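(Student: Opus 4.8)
The strategy is to feed the two ingredients supplied by Theorem~\ref{1.4} and Observation~\ref{1.5} into the machine of Theorem~\ref{1.3}. First I would fix the combinatorial $\mathbf k$-parameter $\mathbf x$ whose underlying data include $\bar\partial = \langle \partial_\ell : \ell < \mathbf k\rangle$ with each $\partial_\ell = \aleph_1$ and $\bar J$ the constant sequence with value $J^{\bd}_{\aleph_1}$, the ideal of bounded subsets of $\omega_1$; the index set $\Lambda_{\mathbf x}$ should be chosen so that $|\Lambda_{\mathbf x}| = \aleph_{\omega\cdot\mathbf k}$ (the standard choice of a suitable set of $\bar\partial$-branches), which is exactly what forces the $\aleph_{\omega\cdot\mathbf k}$-freeness bound in clause $(b)$. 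Then I would invoke the black box principle of Shelah~\cite{Sh:227} to guarantee that $\mathbf x$ has a $\chi$-black box for a sufficiently large regular $\chi$ with $\chi^+ \ge \theta + |R|^+$; here $R = \bbZ$ and $\theta = \aleph_1$, so this is a mild size requirement and is where ZFC does the heavy lifting.

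The second step is to certify the fitness hypotheses. By Theorem~\ref{1.4}, the pair $(\aleph_1, J^{\bd}_{\aleph_1})$ fits the triple $(\aleph_1, \mathbf d^1_p, \Xi^1_p)$; assembling $\mathbf k$ copies of this along the coordinates $\ell < \mathbf k$ yields that $\mathbf x$ $\theta$-fits the module problem $(\chi, \mathbf d^1_p, \Xi^1_p)$ with $\theta = \aleph_1$ (one must check that the product/sum construction over the $\mathbf k$ coordinates preserves fitness, which is the routine ``concatenation of frames'' argument). More importantly, Observation~\ref{1.5} gives that $(\aleph_0, J^{\bd}_{\aleph_0})$ \emph{freely} fits $(\aleph_0, \mathbf d^0_p, \Xi^0_p)$, so with the analogous assembly $\mathbf x$ freely $\theta$-fits the relevant triple, and moreover $\mathbf x$ is $\aleph_{\omega\cdot\mathbf k}$-free with respect to $\mathbf K_p$ in the sense of Definition~\ref{d542n} — this uses the standard computation that branches through a tree of height governed by $\bar\partial$ give freeness up to $\aleph_{\omega\cdot\mathbf k}$ but not beyond.

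Now apply Theorem~\ref{1.3}: part~(1) produces an $(R,\mathbf x)$-construction $\gx$ with $G = G_{\gx}$ an abelian group such that, for every $(G_*, H_*, h_*)\in\Xi^0_p$ and every embedding $h_0 \in \Hom(G_*, G)$, no $h_1 \in \Hom(G, H_*)$ satisfies $h_0 \circ h_1 = h_*$. Taking $h_*$ to be (a multiple of) the identity-type witness built into $\Xi^0_p$ — as in the displayed diagram with $h_\ast : Rz \to R$, $z \mapsto 1$ — the nonextendability statement translates exactly into the assertion that every $h \in \Hom(G, H_*)$ is \emph{small}: if some $h$ were non-small it would, by the definition of $\Xi^0_p$, furnish precisely the forbidden factorization. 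This gives clause~$(a)$. Part~(2) of Theorem~\ref{1.3}, applicable because $\mathbf x$ freely $\theta$-fits the triple, then upgrades $G$ to be $\sigma$-free (with respect to $\mathbf K_p$) for the $\sigma$ witnessing freeness of $\mathbf x$, namely $\sigma = \aleph_{\omega\cdot\mathbf k}$, delivering clause~$(b)$.

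\textbf{Main obstacle.} The delicate point is not the invocation of Theorem~\ref{1.3} — once the hypotheses are in place that is purely formal — but verifying that the two single-coordinate frames of Theorem~\ref{1.4} and Observation~\ref{1.5} can be \emph{simultaneously} realized on one parameter $\mathbf x$ whose branch set has size exactly $\aleph_{\omega\cdot\mathbf k}$, and that the ``free fitting'' survives the passage from $\aleph_0$-level data (Observation~\ref{1.5}, which controls smallness) to the $\aleph_1$-indexed tower (Theorem~\ref{1.4}, which controls the $p$-group structure). Concretely, one must match the module parameter $\mathbf d^0_p$ used for the smallness conclusion with the $\mathbf k$-fold tree geometry that produces $\aleph_{\omega\cdot\mathbf k}$-freeness, checking that the relevant $S_m$, $\Lambda$, and $J_m$ in Hypothesis~1.5 can be chosen compatibly; this bookkeeping — essentially showing the product of the two frames is again a frame of the right cardinality and freeness degree — is where the real work lies, and it is handled by the general fitting calculus developed in Sections~3 and~4 rather than by any new idea here.
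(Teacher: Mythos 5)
There is a genuine gap: you have conflated the two corollaries of Section~4 and chosen the wrong combinatorial parameter. For this statement the paper uses \emph{only} Observation~\ref{1.5} (Proposition~\ref{d31}): one takes $\bar\partial$ to be the constant sequence $\aleph_0$ of length $\mathbf k$ and $\bar J$ constantly $J^{\bd}_{\aleph_0}$, the ideal of bounded subsets of $\omega$, finds (via the existence results of \cite{Sh:1028} cited in Discussion~\ref{exmbb}) an $\aleph_{\omega\cdot\mathbf k}$-free such $\mathbf x$ with a $\chi$-black box for $\chi=\aleph_1$, notes that by Observation~\ref{1.5} $\mathbf x$ freely $\aleph_1$-fits $(\aleph_0,\mathbf d^0_p,\Xi^0_p)$, and applies Theorem~\ref{1.3}. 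Your parameter has $\partial_\ell=\aleph_1$ and $\bar J$ constantly $J^{\bd}_{\aleph_1}$, which does not match the frame of Observation~\ref{1.5}: Definition~\ref{d60n} requires the fitting statement to be about $(\bar\partial_{\mathbf x},\bar J_{\mathbf x})$ themselves, and Observation~\ref{1.5} only certifies the pair $(\aleph_0,J^{\bd}_{\aleph_0})$ — its proof is intrinsically about countable direct sums and $p$-adic completions and does not transfer to the $\aleph_1$-indexed setting. So with your $\mathbf x$ you cannot invoke the one lemma that delivers clause~$(a)$. Moreover, with $\partial_\ell=\aleph_1$ the natural freeness degree produced by the $\mathbf k$-fold black-box construction is $\aleph_{\omega_1\cdot\mathbf k}$, not the required $\aleph_{\omega\cdot\mathbf k}$.

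The ``main obstacle'' you identify — simultaneously realizing the frames of Theorem~\ref{1.4} and Observation~\ref{1.5} on one parameter — is a phantom. Theorem~\ref{1.4}, $\mathbf d^1_p$ and $\Xi^1_p$ play no role in this corollary (they are the ingredients of the \emph{other} corollary, on $\aleph_{\omega_1\cdot\mathbf k}$-freeness and $\Hom(G,F)=0$), and the attempted combination cannot even be set up, since a single constant $\bar\partial$ cannot be both $\aleph_0$ and $\aleph_1$. Your translation of the non-factorization conclusion of Theorem~\ref{1.3}(1) into ``every $h\in\Hom(G,H_*)$ is small'' via the definition of $\Xi^0_p$ is the right mechanism for clause~$(a)$, and invoking part~(2) for clause~$(b)$ is correct in principle; but both steps only go through once the parameter is fixed as above, with the $\aleph_0$-frame and Observation~\ref{1.5} alone.
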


Finally, we recover and extend some interesting  results from \cite{Sh:1028}, thereby answering

Problem  \ref{p1.2}:

\begin{corollary}
There is an abelian group $G$  equipped with the following two properties:

\begin{enumerate}
	\item[$(a)$]  if $G$ is   $\aleph_{\omega_1
		\cdot \mathbf k}$-free
	with respect to $\mathbf{K}_p$,
	
	\item[$(b)$]  $\Hom(G,F) = 0$ for all indecomposable $\mathbf {K}_p$-free groups $F$.
\end{enumerate}

\end{corollary}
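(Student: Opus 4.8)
The plan is to obtain $G$ as the module $G_{\gx}$ produced by Theorem~\ref{1.3}, applied to a $\mathbf k$-dimensional combinatorial parameter assembled from Theorem~\ref{1.4}. Concretely, I would first take $\mathbf k$ copies of the pair $(\aleph_1, J^{\bd}_{\aleph_1})$ and glue them into a single $\mathbf k$-combinatorial $\bar\partial$-parameter $\mathbf x$ with $\bar\partial=\langle\aleph_1:\ell<\mathbf k\rangle$ and $\bar J=\langle J^{\bd}_{\aleph_1}:\ell<\mathbf k\rangle$, by the same construction of $\mathbf k$-combinatorial parameters from such pairs that underlies the preceding corollary (where $\mathbf k$ copies of the $\aleph_0$-pair of Observation~\ref{1.5} are stacked to reach $\aleph_{\omega\cdot\mathbf k}$-freeness). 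Using Theorem~\ref{1.4} coordinatewise, this $\mathbf x$ should freely $\theta$-fit the module problem $(\chi,\mathbf d^1_p,\Xi^1_p)$ with $\theta=\aleph_1$, and --- this is the one combinatorial computation that needs care --- $\mathbf x$ should be $\aleph_{\omega_1\cdot\mathbf k}$-free with respect to $\mathbf K_p$: each of the $\mathbf k$ coordinates is an $\aleph_1$-frame contributing a block of order type $\omega_1$ to the freeness index, so $\mathbf k$ of them accumulate to $\aleph_{\omega_1\cdot\mathbf k}$, exactly as $\mathbf k$ copies of an $\aleph_0$-frame accumulate to $\aleph_{\omega\cdot\mathbf k}$.

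Next I would verify the remaining hypotheses of Theorem~\ref{1.3}. Since $R=R_{\mathbf d^1_p}=\bbZ$ is countable and $\theta=\aleph_1$, the arithmetic requirement $\chi^+\ge\theta+|R|^+$ reduces to $\chi\ge\aleph_0$ and is harmless; and by the black box theorem of Shelah~\cite{Sh:227}, which holds in $\ZFC$, one may choose $\chi$ large enough that $\mathbf x$ carries the $\chi$-black box. With all hypotheses in place, Theorem~\ref{1.3}(1) yields an $(R,\mathbf x)$-construction $\gx$, so $G:=G_{\gx}$ is an abelian group of cardinality $|\Lambda_{\mathbf x}|$; and since $\mathbf x$ freely $\theta$-fits the triple, Theorem~\ref{1.3}(2) adds that $G$ is $\sigma$-free whenever $\mathbf x$ is, hence $G$ is $\aleph_{\omega_1\cdot\mathbf k}$-free with respect to $\mathbf K_p$ --- this is clause~(a) --- and that $\Hom_{\bbZ}(G,H_*)=0$ for every $H_*$ with $(G_*,H_*,h_*)\in\Xi^1_p$. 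Since in $\Xi^1_p$ one has $H_*=G_*$ and $h_*=\id_{G_*}$, this last statement says precisely that $\Hom(G,G_*)=0$ for every $G_*=\bigoplus\{G^*_{m,\alpha}:m<\mathbf k,\ \alpha<\omega_1\}$ with $G^*_{m,\omega\alpha+n}\cong\bbZ/p^{n+1}\bbZ$.

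It remains to upgrade this to clause~(b). Let $F$ be an indecomposable $\mathbf K_p$-free group; by the definition of freeness relative to $\mathbf d^1_p$ (whose module set is $\cM=\{\bbZ/p^n\bbZ:n\ge 1\}$), $F$ is a direct sum of cyclic $p$-groups, and being indecomposable it is a single such summand, so $F\cong\bbZ/p^n\bbZ$ for some $n\ge 1$ (the case $F=0$ being trivial). Fix one of the groups $G_*$ above: its summand $G^*_{0,n-1}$ is a copy of $\bbZ/p^n\bbZ$, so $G_*=F'\oplus G_*'$ with $F'\cong F$, and since $\Hom(G,-)$ turns a finite direct sum into a direct sum, $0=\Hom(G,G_*)=\Hom(G,F')\oplus\Hom(G,G_*')$, whence $\Hom(G,F)\cong\Hom(G,F')=0$. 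This yields (b) and finishes the proof.

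The only genuinely delicate ingredient is the freeness bookkeeping: verifying that the $\mathbf k$-fold product of the $\aleph_1$-frames of Theorem~\ref{1.4} has freeness degree exactly $\aleph_{\omega_1\cdot\mathbf k}$ with respect to $\mathbf K_p$, and --- should Theorem~\ref{1.4} be stated only with ``fits'' rather than ``freely fits'' --- checking that the fitting produced there is in fact free, which is what licenses invoking clauses (c) and (d) of Theorem~\ref{1.3}. Everything else is routine assembly around Theorems~\ref{1.3} and~\ref{1.4}.
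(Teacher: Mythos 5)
Your proposal is correct and follows essentially the same route as the paper: obtain an $\aleph_{\omega_1\cdot\mathbf k}$-free combinatorial $\bar\partial$-parameter with a $\chi$-black box (the paper simply cites Discussion~\ref{exmbb} for this rather than spelling out the product construction), feed it Theorem~\ref{1.4} for the fitting, and conclude via Theorem~\ref{1.3}; your closing reduction from $\Hom(G,G_*)=0$ to $\Hom(G,F)=0$ for indecomposable $\mathbf K_p$-free $F$ via direct summands is exactly the intended (and in the paper unstated) final step. The one caveat you flag is real --- Theorem~\ref{1.4} asserts only ``fits'' while clauses (c)--(d) of Theorem~\ref{1.3} need ``freely fits'' --- but the proof of Theorem~\ref{1.4} does establish that $G_1$ and $G_1/G_0$ are $(\aleph_1,\mathbf K_p)$-free, which supplies the missing freeness clause.
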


In this paper all groups are abelian, and all rings are commutative.
For all unexplained definitions from set theoretic algebra
see
the  books by Eklof-Mekler \cite{EM02}  and G\"{o}bel-Trlifaj
\cite{GT}. Also, for  unexplained definitions from the group theory
see  Fuchs' book \cite{fuchs}.

\section{Conveniences with freeness of trees and black box}

In this section, we recall some preliminaries which are needed for the later sections of the paper. The reader may skip this section and return to it as needed.

\subsection{Freeness of relative trees}
In this subsection, we introduce a series of definitions and results with a set-theoretic emphasis. All of these will be utilized in the subsequent discussion.

\begin{notation}
Let $0<\mathbf{k}<\omega$. Suppose  $\bar\partial=\langle \partial_\ell=\partial(\ell):\ell
< \bold k\rangle$ is a sequence of regular cardinals or just limit ordinals and
$\bar S = \langle S_\ell:\ell < \bold k\rangle$ is a sequence of sets.
We allow
$\bar\partial$ to be  constant,
i.e. $ \partial_\ell = \partial$ for some $\partial$ and all $\ell < \bold k$.\end{notation}

Suppose $\cF \subseteq \prod\limits_{\ell < \bold k}{}^{S_\ell} X_\ell$ is a family of $\bold k$-sequences of functions. We say $\cF$ is {\em weakly ordinary} if for each $\bar\eta = \langle \eta_\ell : \ell < \bold k \rangle \in \cF$, each $\eta_\ell$ is a one-to-one function. In the case that the sets $S_\ell$ and $X_\ell$ are sets of ordinals, we say $\cF$ is {\em ordinary} if each $\eta_\ell$ as above is an increasing function.

\begin{definition} (\cite[Definition 0.7]{Sh:1028}).
\label{y37}
Suppose $\cF \subseteq {}^S X$ is a family of functions from $S$ into $X$, $J$ is an ideal on $S$, and $\theta$ is a cardinal.

\begin{enumerate}
    \item We say $\cF$ is {\em $(\theta, J)$-free} if for every $\cF' \subseteq \cF$ of cardinality $< \theta$, there is a sequence $\langle w_\eta : \eta \in \cF' \rangle$ such that:
    \begin{enumerate}
        \item $\eta \in \cF' \Rightarrow w_\eta \in J$, and
        \item if $\eta_1 \ne \eta_2 \in \cF'$ and $s \in S \setminus (w_{\eta_1} \cup w_{\eta_2})$, then $\eta_1(s) \ne \eta_2(s)$.
    \end{enumerate}

    \item We say $\cF$ is {\em $\theta$-free} if it is $(\theta, J)$-free where $S \subseteq \Ord$ and $J = J^{\bd}_S$, the ideal of bounded subsets of $S$.

\end{enumerate}
\end{definition}

\begin{definition} (\cite[Notation 1.2]{Sh:1028}).
	\label{a3}
Let  $\bar\partial = \langle \partial_\ell = \partial(\ell):\ell < \bold k\rangle$
and $\bar S = \langle S_\ell:\ell < \bold k\rangle$
be as described above.
\begin{enumerate}
	\item Let $\bar S^{[\bar\partial]} = \prod\limits_{\ell < \bold k}
	{}^{\partial(\ell)}S_\ell$
	and for $u \subseteq \{0,\dotsc,\bold k -1\}$ set $\bar S^{[\bar\partial,u]} = \prod\limits_{\ell \in u}
	{}^{\partial(\ell)}S_\ell$.
	Furthermore, if each $S_\ell$ is a set of ordinals, then let
	$\bar S^{< \bar\partial>} = \{\bar\eta \in
	\bar S^{[\bar\partial]}$: each $\eta_\ell$ is increasing$\}$ and
	 $\bar S^{<\bar\partial,u>}=\{\bar\eta \in
	\bar S^{[\bar\partial, u]}$: each $\eta_\ell$ is increasing$\}$.
	
	\item Suppose $\bar\eta \in \bar S^{[\bar\partial]},m < \bold k$ and $i <
	\partial_m$. Then
\begin{enumerate}
\item  for $w \subseteq \partial_m,\bar\eta
	\upharpoonleft (m,= w)$ is defined as $\langle \eta'_\ell:\ell <
	\bold k\rangle$ where
	$$
	\eta'_\ell	=\left\{\begin{array}{ll}
	\eta_\ell &\mbox{if } \ell < \bold k \wedge \ell \ne m, \\
	\eta_\ell \rest w	&\mbox{if }  \ell = m.
	\end{array}\right.
	$$

\item $\bar\eta \upharpoonleft (m,i)=\bar\eta
	\upharpoonleft (m,= \{i\})$.

\item $\bar\eta \upharpoonleft (m) =
	\langle \eta_\ell:\ell < \bold k \wedge \ell \ne m \rangle$.
\end{enumerate}	
	\item Suppose $\Lambda \subseteq \bar S^{[\bar\partial]}, m < \bold k$, $w \subseteq \partial_m$ and $u \subseteq \{0,\dotsc,\bold k-1\}$. Then
\begin{enumerate}
 \item 
  Set $\Lambda \upharpoonleft(m,=w) = \{\bar\eta
	\upharpoonleft (m,=w):\bar\eta \in \Lambda\}$.
	
	\item for $i
	\le \partial_m$ set $\Lambda \upharpoonleft(m,< i)
	= \bigcup\limits_{j<i}\Lambda \upharpoonleft (m, j)$.
	
	\item $\Lambda_{\in u} = \bigcup\{\Lambda
	\upharpoonleft(m,i):m \in u,i < \partial_m\}$. We may write ``$< m$" instead of
	``$\in m$" when ``$u = \{0,\dotsc,m-1\}$" and let
	$\Lambda_{m} = \Lambda_{\in \{m\}}$.
\end{enumerate}	
	\item We say $\Lambda \subseteq \bar S^{[\bar\partial]}$ is {\em tree-like}
	if for each $\bar\eta,\bar\nu \in \Lambda$ and $m < \bold k,$
$$\bar\eta \upharpoonleft (m,i) = \bar
	\nu \upharpoonleft (m,j) \implies \eta_m \rest i = \nu_m \rest j.$$
	
	\item We say $\Lambda \subseteq \bar S^{<\bar\partial>}$ is {\em normal} if
	whenever $\bar\eta,\bar\nu \in \Lambda, m < \bold k, i,j < \partial_m$ and
	$\eta_m(i) = \nu_m(j)$, then $i=j$.
\end{enumerate}
\end{definition}
We now recall the notion of combinatorial $\bar \partial$-parameter from \cite[Definition 1.3]{Sh:1028}:

\begin{definition}
	\label{a6}
    We say $\bold x$ is a combinatorial $\bar \partial$-parameter, when
    $\bold x = (\bold k, \bar\partial, \bar S, \Lambda, \bar J) = (\bold k_{\bold x}, \bar\partial_{\bold x}, \bar S_{\bold x}, \Lambda_{\bold x}, \bar J_{\bold x})$ and
    it satisfies:
    \begin{enumerate}
        \item[$(a)$] $\bold k \in \{1,2,\ldots\}$. Let $k = k_{\bold x} = \bold k - 1$,

        \item[$(b)$] $\bar\partial = \langle \partial_m : m < \bold k \rangle$ is a sequence of limit ordinals,

        \item[$(c)$] $\bar S = \langle S_m : m < \bold k \rangle$, where each $S_m$ is a set (of ordinals),

        \item[$(d)$] $\Lambda \subseteq \bar S^{[\bar\partial]}$,

        \item[$(e)$] $\bar J = \langle J_m : m < \bold k \rangle$, where each $J_m$ is an ideal on $\partial_m$.
    \end{enumerate}
\end{definition}

\begin{convention}
\label{a666}
Suppose that $\bold x$ is a combinatorial $\bar \partial$-parameter as above.
\begin{enumerate}
    \item If for each $\ell < \bold k$, we have $\partial_\ell = \partial$, then we may write $\partial$ instead of $\bar\partial$, and call $\bold x$ a combinatorial $(\partial, \bold k)$-parameter. This may be abbreviated as $(\partial, \bold k)$-c.p.

    \item We may say $\bold x$ is a $\bold k$-c.p. if it is an $(\aleph_0, \bold k)$-c.p.

    \item Similarly, if all $S_\ell$'s are equal to a set $S$, then we may write $S$ instead of $\bar S$.
\end{enumerate}
\end{convention}

Suppose that $\bold x$ is a combinatorial $\bar \partial$-parameter. Then
 $\bold x$  is called {\em (weakly) ordinary} if $\Lambda_{\bold x}$ is (weakly) ordinary.
	Furthermore, if
\begin{center}
$\Lambda_{\bold x} =
	\{\langle \eta_{\bold x, \ell}:\ell < \bold k_{\bold x}\rangle:$ each $\eta_{\bold x, \ell}
	\in {}^{\partial_{\bold x, \ell}}S_{\bold x,\ell}$ is increasing (one-to-one)$\}$,
\end{center}
then we call $\bold x$ {\em (weakly) ordinary full}.
Also,  $\bold x$ is {\em disjoint}, if
	$\langle S_{\bold x, \ell}:\ell < \bold k_{\bold x}\rangle$ is a sequence of pairwise
	disjoint sets.
Similarly, we say $\bold x$ is {\em free}, when $\Lambda_{\bold x} $ is free.

\subsection{The multi black box}

 We now intend to define the kind of black box that is required for our purpose. We start by defining the notion of a pre-black box.

\begin{definition} (\cite[Definition 1.7]{Sh:1028}).
	\label{a9}
	Assume $\bold x
	= (\bold k,\bar\partial,\bar S,\Lambda,\bar J)$ is a  combinatorial $\bar \partial$-parameter, and
$\bar \chi = \langle \chi_m:m < \bold k\rangle$ is a sequence of cardinals.
	\begin{enumerate}
	\item  $\bar \alpha$ is a  $(\bold x,\bar \chi)$-pre-black box, if
	\begin{enumerate}
		\item   $\bar \alpha = \langle \bar \alpha_{\bar \eta}:\bar
		\eta \in \Lambda\rangle$
		
		\item   $\bar \alpha_{\bar \eta} = \langle \alpha_{\bar \eta,m,i}:m
		< \bold k,i < \partial_m \rangle$ and
		$\alpha_{\bar\eta,m,i} < \chi_m$
		
		\item   if $\langle h_{m}: m < \bold k \rangle \in \prod\limits_{m<\bold k}{}^{\Lambda_{m}} \chi_m$, then there exists
		some $\bar \eta \in \Lambda$ such that for all
		 $m < \bold k$ and $i < \partial_m$ we have $h_m(\bar \eta
		\upharpoonleft ( m,i)) = \alpha_{\bar \eta,m,i}$.
	\end{enumerate}
We may also replace $\bold x$ by $\Lambda$ and say
	$\bar \alpha$ is a $(\Lambda,\bar \chi)$-pre-black box.
\item  We say $\bold x$ has
	$\bar\chi$-pre-black box, if some $\bar \alpha$ is a $(\bold x,\bar \chi)$-pre-black box.

\item Given $\bar\alpha$ as above, we may identify it with a function $\bold b$ with domain
	$\{(\bar\eta,m,i):\bar\eta \in \Lambda, m < \bold k,i <
	\partial_m\}$
 such that  $\bold b_{\bar\eta}(m,i) =
	\bold b(\bar\eta,m,i) = \alpha_{\bar\eta,m,i}.$
\end{enumerate}
\end{definition}

\begin{notation}
In Definition \ref{a9}, we may replace $\bar \chi$ by $\chi$, if $\bar \chi =\langle
	\chi:\ell < \bold k \rangle$, or by $\bar C = \langle
	C_\ell:\ell < \bold k\rangle$ when $|C_\ell| = \chi_\ell$ and
	$\Rang(h_\ell) \subseteq C_\ell$.
\end{notation}
\begin{definition} (\cite[Definition 1.7]{Sh:1028}).
\label{a99}
Assume $\bold x$  and
$\bar \chi$ are as in Definition \ref{a9}.
	We say $\bold x$ has a {\em $\bar \chi$-black box}, if   there exist a partition
	$\bar\Lambda = \langle \Lambda_\alpha:\alpha <
	|\Lambda|\rangle$ of $\Lambda$ and a sequence $\bold n=\langle \bar\nu_\alpha:\alpha <
	|\Lambda|\rangle$ such that:
	\begin{enumerate}
\item each $\bold x \rest \Lambda_\alpha$ has
	$\bar\chi$-pre-black box,

		\item  $\Lambda=\{\bar\nu_\alpha:\alpha < |\Lambda|\},$
		
		\item  if $\mu$ is the maximal cardinal satisfying  $(\forall \ell < \bold
		k)2^{< \mu} \le \chi_\ell$, then
 $$\alpha < \beta < \alpha + \mu
		\Rightarrow \bar\nu_\alpha = \bar\nu_\beta,$$
		
		\item  if $\alpha \le \beta < |\Lambda|$, $(\alpha, \beta) \neq (0, 0)$ and $\bar\eta
		\in \Lambda_\beta$, then $\nu_{\alpha,\bold k-1} < \eta_{\bold k-1}
		\mod J_{\bold k-1}$.
	\end{enumerate}
\end{definition}
We now recall freeness for a combinatorial parameter from \cite[Definition 1.11]{Sh:1028}.
\begin{definition}
	\label{a12}
Suppose  $\bold x$ is a    combinatorial $\bar \partial$-parameter, and $\Lambda_* \subseteq \bar S_{\bold x}^{[\bar\partial_{\bold x}]}$.
\begin{enumerate}
\item We say  $\bold x$ is {\em $\theta$-free over $\Lambda_*$},
	if it is weakly ordinary and
	for every $\Lambda \subseteq \Lambda_{\bold x} \backslash \Lambda_*$
	of cardinality $< \theta$, there is a list $\langle \bar\eta_\alpha:\alpha <
	\alpha_*\rangle$ of $\Lambda$ such that for every $\alpha$, for some
	$m < \bold k_{\bold x}$ and $w \in J_{\bold x,m}$, if $$\bar\nu \in \{\bar\eta_\beta:\beta
	< \alpha\} \cup \Lambda_* \quad and \quad\bar\nu \upharpoonleft (m) = \bar\eta_\alpha
	\upharpoonleft(m),$$then we can deduce that $\nu_m(j) \ne \eta_{\alpha,m}(i)  $ for all $ j < \partial_{\bold x,m} $  and $i \in \partial_{\bold x,m} \setminus w$. If $\Lambda_{\bold x}$ is normal, we can restrict ourselves to $i = j$ and this is the usual case.
\item Suppose $\bar\Lambda = \langle \Lambda_{\bar\nu}:\bar\nu
	\in \Lambda_{\bold x}\rangle$ where each $\Lambda_{\bar\nu} \subseteq
	\Lambda_{\bold x}$. We say $\bold x$ is {\em $\theta$-free over $\Lambda_*$ respecting
	$\bar\Lambda$} if for every $\Lambda \subseteq \Lambda_{\bold x}
	\setminus \Lambda_*$ of cardinality $< \theta$, there is a list
	$\langle \bar\eta_\alpha:\alpha < \alpha_*\rangle$ of $\Lambda$ witnessing
	$\bold x$ is $\theta$-free over $\Lambda_*$ such that for every $\alpha < \alpha_*$,
	 $$\bar\eta_\alpha \in \Lambda_{\bar\nu}\implies \bar\nu \in \{\bar\eta_\beta:\beta <
		\alpha\} \cup \Lambda_*.$$	
\end{enumerate}
\end{definition}

\begin{discussion}\label{exmbb}
 The existence problem of
	$\bar\chi$-black boxes, equipped with
	the above freeness properties, is the subject of
	\cite[1.20 and 1.25]{Sh:1028}.
\end{discussion}

\section {The relative notions of freeness and module parameters} \label{1}
The main result of this section is Theorem \ref{d64n}. In \cite{Sh:1028}, Shelah constructs abelian groups and modules which are, on the one hand, quite free and, on the other hand, have a small dual. The results in \cite{Sh:1028} do not apply directly  to the classes $\mathbf{K}_p$ and $\mathbf{K}_p^{[rs]}$.

\begin{notation} If $G$ is an abelian group and $n\in\mathbb{N}$, then set:
	\begin{enumerate}
		\item $nG:=\{ng:g\in G\}$,
			\item $G[n]:=\{g\in G:ng=0\}$,
		\item $\ord(g)$ means the order of an element $g$, \item $\textbf{ht}_p(g)$ stands for transfinite   height of the element $g$ at prime $p$.
			\end{enumerate}
	\end{notation}

\begin{definition}
	\label{d52na}
	Let $p$ be a prime number.
By a  $p$ -group is meant a group the orders of whose elements are powers of   $p$.
 Recall that  $p$-groups without elements of infinite
heights  are called separable. A reduced group means a group with no nonzero divisible subgroup.
\end{definition}

\begin{definition}
\label{d52n}
Let $p$ be a prime number.
\begin{enumerate}
\item Let $\mathbf {K}_p$ be the class of abelian $p$-groups.
Also, let $\mathbf {K}_p^{[rs]}$ be the class of abelian $p$-groups $G$ which are
reduced and separable.
\item Suppose $G,H \in \mathbf {K}_p$. A map $g \in \Hom(H,G)$ is called {\em small}, if  the Pierce condition
$p^n H[p^k]\subseteq
\Ker(g)$ holds, with the convention that $p^n H[p^k]=p^n H\cap H[p^k]$.
In  means that for every $k > 0$,
there exists $n > 0$ such that
$\ord(a)\leq p^k$
 and $\textbf{ht}_p(a)\geq n$ imply that
$g(a)=0$.\end{enumerate}
\end{definition}
\begin{definition}\begin{enumerate}
\item An abelian group $G$ is called {\em $\mathbf {K}_p$-free}, provided it is the direct sum of finite
   cyclic $p$-groups.

\item An abelian group $G$ is called {\em $(\theta,\mathbf {K}_p)$-free}, if  every $H
\subseteq G$ of cardinality $< \theta$ is $\mathbf {K}_p$-free.
\end{enumerate}
\end{definition}

We now give, in a series of definitions, a more general notion, that we will work with.
\begin{definition}
\label{d54n}
A {\em module parameter} is a tuple
\[
\mathbf d= \langle  R, M^*, \cM, \theta      \rangle= \langle  R_{\mathbf d}, M_{\mathbf d}^*, \cM_{\mathbf d}, \theta_{\mathbf d}      \rangle
\]
where:
\begin{enumerate}
\item[$(a)$]   $R$ is a ring,

\item[$(b)$]   $M^*$ is a fixed $R$-module,

\item[$(c)$]   $\cM$ is a set or class of $R$-modules,

\item[$(d)$]   $\theta\geq \aleph_0$ is a regular cardinal.
\end{enumerate}
\end{definition}
Given a module parameter $\mathbf d$, we define some classes of $R_{\mathbf d}$-modules as follows.
\begin{definition}
\label{d541n}
Suppose $\mathbf d=\langle  R_{\mathbf d}, M_{\mathbf d}^*, \cM_{\mathbf d}, \theta_{\mathbf d}      \rangle$ is a module parameter.
\begin{enumerate}
\item Let $\mathbf K_{\mathbf d}$ be the class of $R_{\mathbf d}$-modules.

\item Let $\mathbf K^{\fr}_{\mathbf d}$ be the class of  $R_{\mathbf d}$-modules $G$ which are
{\em $\mathbf d$-free}, i.e.,
 $G = \bigoplus\{M_s:s \in I\} \oplus M$
 where $M \cong
M^*_{\mathbf d}$ and each $M_s$ is isomorphic to some member of
$\cM_{\mathbf d}$; here $\fr$ stands for free.

\item Let $\mathbf K^{\sfr}_{\mathbf d}$ be the class of  $R_{\mathbf d}$-modules $G$ which are
{\em semi-$\mathbf d$-free}, i.e.,  $G = \bigoplus\{M_s:s \in I\}$  where  each $M_s$ is isomorphic to some member of
$\cM_{\mathbf d}$; here $\sfr$ stands for semi-free.
\end{enumerate}
\end{definition}

We also define the notion of freeness of one $R$-module over another  with respect to a module parameter.
\begin{definition}
\label{d542n}
Suppose $\mathbf d$ is a module parameter. For $R_{\mathbf d}$-modules $M_1,M_2$, we say that
 {\em $M_2$ is $\mathbf d$-free over $M_1$}, when $M_1
  \subseteq M_2$ are from $\mathbf K_{\mathbf d}$ and for some $N \in
  \mathbf K_{\mathbf d}^{\sfr}$ we have $M_2 =  M_1 \oplus N$. In the case  $\mathbf d$ is clear from the context, we may say   $M_2$ is free over $M_1$.
\end{definition}

Note that an $R_{\mathbf d}$-module  is $\mathbf d$-free if and only if
 it is $\mathbf d$-free over some $M\cong M^*_{\mathbf d}$.

\begin{definition}
\label{d543n}
Suppose $\mathbf d= \langle  R_{\mathbf d}, M_{\mathbf d}^*, \cM_{\mathbf d}, \theta_{\mathbf d}      \rangle$ is a module parameter and $\theta$ is an infinite cardinal.
\begin{enumerate}
\item Let $\mathbf K^{\fr}_\theta = \mathbf K^{\fr}_{\mathbf d,\theta}$ be
the class of $R_{\mathbf d}$-modules $M$ which are {\em $(\mathbf d,\theta)$-free}, this means that there are $\bar M, I$ such that:
\begin{enumerate}
\item[$(a)$]  $I$ is a $\theta$-directed partial order,

\item[$(b)$]  $\bar M$ is a sequence $\langle M_s:s \in I\rangle$
of members of $\mathbf K^{\fr}_{\mathbf d}$,

\item[$(c)$]  $I$ has a minimal member $\min(I)$ such that
  $M_{\min(I)} \cong M^*_{\mathbf d}$,

\item[$(d)$]   $s <_I t \Rightarrow M_t$ is free over $M_s$,

\item[$(e)$]   $M = \bigcup\{M_s:s \in I\}$,

\item[(f)]  each $M_s$ has cardinality $< \theta$.
\end{enumerate}If $\mathbf d$ is clear from the context, we may say   $M$ is $  \theta$-free.
\item We say {\em $M_2$ is $(\mathbf d, \theta)$-free
 over $M_1$} if
 $M_1$ is a sub-module of $M_2$ and there are $\bar M, I$ as in clause
(1) with $M_{\min(I)} = M_1$ and $M=M_2.$ If $\mathbf d$ is clear from the context, we say   $M_2$ is $  \theta$-free
	over $M_1$.
\end{enumerate}
\end{definition}

We also define another variant of the above classes of modules.
\begin{definition}
\label{d56n}
 Suppose $\mathbf d$ is a module parameter. The class $\mathbf K^{\text g}_{\mathbf d}$ is defined as
the class of all  $M \in \mathbf K_{\mathbf d}$ extended  by the individual constants $c^M$ for
$c \in M_{\mathbf d}^*$ such that $c \mapsto c^M$ is
an embedding of $M_{\mathbf d}^*$ into $M$.
The classes
$\mathbf K^{\gr}_{\mathbf d}, \mathbf K^{\gr}_{\mathbf d,\theta}$ and $\mathbf K^{\sgr}_{\mathbf d}$  are defined in a similar way using the classes  $\mathbf K^{\fr}_{\mathbf d}, \mathbf K^{\fr}_{\mathbf d,\theta}$ and
$\mathbf K^{\sfr}_{\mathbf d}$  respectively. So, $\mathbf K^{\sgr}_{\mathbf d}=\{(M,c_a)_{a\in M^*_{\mathbf d}}:M\in\mathbf K^{\sfr}_{\mathbf d}\emph{  with } c_a=0\}$.
\end{definition}
In the sequel, we aim to generalize \cite[Definition 2.11]{Sh:1028} of $\theta$-fitness to the more general context of module parameters.
\begin{definition}
\label{d57n}
Suppose $\mathbf d$ is a  module-parameter.  A {\em $\mathbf d$-problem}  is a set $\Xi$   of triples of the form $(G,H,h)$ satisfying:
\begin{enumerate}

\item[$(\alpha)$]  $G$ and $H$ are $R_{\mathbf d}$-modules,

\item[$(\beta)$]  $h$ is a nonzero homomorphism from $G$ to $H$ as an $R_{\mathbf d}$-module homomorphism.
\end{enumerate}
\end{definition}
\begin{definition}
\label{d61n}
 We say $(\chi,\mathbf d,\Xi)$ is a {\em module problem}, when
\begin{enumerate}
\item $\chi$ is an infinite cardinal,

\item  $\mathbf d:= \langle  R_{\mathbf d}, M_{\mathbf d}^*, \cM_{\mathbf d}, \theta_{\mathbf d}      \rangle$ is a module parameter,

\item  $\Xi$ is a $\mathbf d$-problem,


\item   if 
$(G_*,H_*,h_*) \in \Xi$, then
$|H_*| + |G_*| \le \chi$,

\item  $\Xi$ has cardinality $\le \chi$,

\item  $\cM_{\mathbf d}$ and each $M \in \cM_{\mathbf d}$ have cardinality $\le \chi$,

\item  $M^*_{\mathbf d}$ has cardinality $\le \chi$.
\end{enumerate}
\end{definition}
The following gives the promised generalization of \cite[Definition 2.11]{Sh:1028}.
\begin{definition}
\label{d60n}
Suppose $\chi$ is an infinite cardinal,  $\mathbf d$ is a module-parameter and $\Xi$ is a $\mathbf d$-problem such that $(\chi, \mathbf d, \Xi)$ is a module problem and set $\theta=\theta_{\mathbf{d}}$. Also, assume $\mathbf x$ is    a combinatorial
	$\bar\partial$-parameter.
  \begin{enumerate}
\item  We say $(\bar\partial,\bar{J})$  $\theta$-fits  the triple
$(\chi,\mathbf d,\Xi)$, when the following conditions $(A)$ and $(B)$ are satisfied, where $R=R_{\mathbf d}$:
\begin{enumerate}
\item[(A)]
\begin{enumerate}
\item[(a)]  $\bar\partial = \langle \partial_m : m < \bold k \rangle$ is a sequence of limit ordinals,
and $\bar J = \langle J_m : m < \bold k \rangle$, where each $J_m$ is an ideal on $\partial_m$,

\item[(b)] $(\chi,\mathbf d,\Xi)$ is a module problem,

\end{enumerate}

\item[(B)]  Suppose that
\begin{itemize}
\item[(a)] $(G_*,H_*,h_*) \in \Xi$,
\item[(b)] $M_* \in
K^{\fr}_{\mathbf d}$ and $M_*=  M^* \oplus N$ for some $N\in K^{\sfr}_{\mathbf d}$,

\item[(c)] $h_0 \in \Hom_R(G_*,M_*)$,
\item[(d)] $\bar M = \langle M_{m,i}:m < \mathbf k,
i < \partial_m\rangle$ is such that $M_{m,i} \in K^{\sfr}_{\mathbf d}$ for
$m < \mathbf k,i < \partial_{m}$ and $\sum\limits_{m,i} \|M_{m,i}\| < \theta$,
\item[(e)] $G_0 = \bigoplus\{M_{m,i}:m < \mathbf k,i <\partial_m\} \oplus M_*$,
so $G_0 \in \mathbf
K^{\fr}_{\mathbf d}$,

\item[(f)] $h_1 \in \Hom_R(G_0,H_*)$
is such that $h_0 \circ h_1:G_* \rightarrow
H_*$ is equal to $h_*$, i.e., the following diagram commutes: $$\xymatrix{
	&&G_\ast\ar[r]^{h_0}\ar[dr]_{h_\ast}& M_\ast \ar[r]^{\subseteq}& G_0\ar[ld]^{h_1}&\\
	&&& H_\ast&
	&&&}$$
\end{itemize}

Then there is $G_1$ such that:

\begin{enumerate}
\item[$(\alpha)$]  $G_1$ is an $R$-module extending $G_0$,

\item[$(\beta)$]  $G_1$ has cardinality $< \theta$,

\item[$(\gamma)$]   there is no $R$-homomorphism  $f$ from
  $G_1$ into $H_*$ extending $h_1$.
\end{enumerate}
 We say $(\partial, J)$ $\theta$-fits the triple
$(\chi,\mathbf d,\Xi)$, when the sequences $\bar\partial,\bar{J}$  are fixed.
\end{enumerate}The definition is neatly summarized in the accompanying diagram:
$$\xymatrix{
&&G_\ast\ar[r]^{h_0}\ar[dr]_{h_\ast}& M_\ast \ar[r]^{\subseteq}& G_0 \ar[r]^{\subseteq}\ar[d]_{h_1}&G_1\ar[dl]^{\nexists f}\\
&&& H_\ast\ar[r]^{=}& H_{\ast}
&&&}$$
\item We say {\em $(\bar\partial,\bar{J})$ freely $\theta$-fits the triple
$(\chi,\mathbf d,\Xi)$}, if in addition it satisfies:

\begin{enumerate}
\item[$(\delta)$]  if $m < \mathbf k$ and $u \in
J_{m}$, then $G_1$ is $\mathbf d$-free over  $$\bigoplus_{\ell < \mathbf k}\{M_{\ell,i}: i < \partial_\ell\emph{ and }\ell = m
\Rightarrow i \in u\} \oplus M_*.$$

\end{enumerate}
\item If $\mathbf x$ has a $\chi$-black box, then we say $\mathbf x$ (freely) $\theta$-fits the triple
$(\chi,\mathbf d,\Xi)$, when $(\bar\partial_{\mathbf x},\bar J_{\mathbf x})$ (freely)  $\theta$-fits the triple
$(\chi,\mathbf d,\Xi)$.
\item In the above definitions, we may omit $\theta$ when $\theta = |R|^+ +
\max\{\partial^+_{\mathbf x,m}:m < \mathbf k_{\mathbf x}\}$. Also, we may say the paring fits $H_\ast$.
\end{enumerate}
\end{definition}

\begin{definition}
\label{d62n}
Suppose $\bold x$ is a combinatorial $\bar\partial$-parameter, the triple $(\chi, \bold d, \Xi)$ is a module problem
and $R=R_{\bold d}$.
\begin{enumerate}
\item An {\em $R$-module $G$ is $(\chi, \bold d, \Xi)$-derived from  $\mathbf x$}, when there is a tuple
   \[
   \langle X_*, G_*, \langle G_{\bar\eta}:\bar\eta
\in \Lambda_{\mathbf x} \rangle, \langle Z_{\bar\eta}:\bar\eta
\in \Lambda_{\mathbf x} \rangle            \rangle
   \]
   such that:

\begin{enumerate}
\item[(a)]
$X_* = \{M_{\bar\eta \upharpoonleft (m,i)}:m < \mathbf k_{\mathbf x},i <
\partial_m$ and $\bar\eta \in \Lambda_{\mathbf x}\} \cup \{M^*_{\bold d}  \},$ where each
$M_{\bar\eta \upharpoonleft (m,i)}$ is semi-$\bold d$-free,
\item[(b)] $G_*=\bigoplus\limits_{M \in X_*}M$,

\item[(c)]   the $R$-module $G$ is generated by $\bigcup\{G_{\bar\eta}:\bar\eta
\in \Lambda_{\mathbf x}\} \cup G_*$, so $G_* \subseteq G$,

\item[(d)]  $G/G_*$ is the direct sum of $\langle (G_{\bar\eta} +
G_*)/G_*:\bar\eta \in \Lambda_{\mathbf x}\rangle$,

\item[(e)]  $Z_{\bar\eta} \subseteq X_*$
for $\bar\eta \in \Lambda_{\mathbf x}$,

\item[(f)]   if $\bar\eta \in \Lambda_{\mathbf x}$, then  the
$R$-submodule $G_{\bar\eta} \cap G_*$ of $G$ is equal to the
$R$-submodule generated by
$$\bigcup\{M_{\bar\eta \upharpoonleft (m,i)}:m < \mathbf k_{\mathbf x}\emph{ and
}i < \partial_{\mathbf x,m}\} \cup \bigcup \{M: M \in         Z_{\bar\eta}  \}.$$
\end{enumerate}
\item By an {\em $(R,\mathbf x, \chi, \bold d, \Xi)$-construction}, we mean a tuple
 $$\gx=\langle \mathbf x, R, G_*, G, \langle M_{\bar\eta}:\bar\eta \in
  \Lambda_{\mathbf x, < \mathbf k_{\mathbf x}}\rangle, \langle
  G_{\bar\eta},Z_{\bar\eta}:\bar\eta \in \Lambda_{\mathbf x}\rangle \rangle$$
  defined as above, and we may say $\gx$ is $(\chi, \bold d, \Xi)$-derived from $\mathbf x$.
  \end{enumerate}
\end{definition}
\begin{notation}
Given an $(R, \mathbf x, \chi, \bold d, \Xi)$-construction $\gx$,   we shall  write $G^{\gx}_*$ for $G_*$, $G_{\gx}$ for $G$, $G_{\gx,\bar\eta}$ for
$G_{\bar\eta}$, and etc.
Furthermore, we may
remove the index $\gx$,
when it is clear from the context.
\end{notation}
The following definition presents several variants of the above concept and can be viewed as a generalization of \cite[Definition 2.4]{Sh:1028}.
\begin{definition}
\label{d691n}
Suppose $\mathbf x$, $(\chi, \bold d, \Xi)$, $G$ and $\gx$ are as in Definition \ref{d62n}.
\begin{enumerate}
\item We say $G$ is {\em simple}   when $Z_{\bar\eta} = \{M^*_{\bold d}\}$ for
every $\bar\eta \in \Lambda_{\mathbf x}$.

\item We say $\gx$ is {\em almost simple}, when for each $\bar\eta \in
\Lambda_{\mathbf x},$ we have $|Z_{\bar\eta} \backslash \{M^*_{\bold d}\}| \le 1$.

\item We say $G$  is {\em $(\chi, \bold d, \Xi)$-freely derived from $\mathbf x$},
if in addition to items (a)-(f) of Definition \ref{d62n}(1) we have
\begin{enumerate}
\item[$(g)$]  if $\bar\eta \in \Lambda_{\mathbf x},m < \mathbf k$ and $w
  \in J_{\mathbf x,m}$,  then there exists some $R$-module $G^\perp_{\bar\eta,m,\omega}$ such that   $G_{\bar\eta} = G_{\bar\eta,m,w} \oplus
  G^\perp_{\bar\eta,m,w}$ is a free $R$-module where
$G_{\bar\eta,m,w}$ is
the $R$-submodule of $G$ generated by
$$\bigcup\{M_{\bar\eta \upharpoonleft (m_1,i_1)}:m_1 < \bold k_{\bold x}, i_1 < \partial_{m_1}, \emph{ and }(m_1 = m \Rightarrow i_1 \in w)\} \cup \bigcup\{M: M \in Z_{\bar\eta}\}.$$
\end{enumerate}

\item We say $\gx$ is a {\em canonical $(R, \bold x, \chi, \bold d, \Xi)$-construction},  if we have
$Z_{\bar\eta} = 0$      for all $\bar\eta \in \Lambda_{\gx}$.

\item  We say $\gx$ is {\em $(< \theta)$-locally free}, if in addition,   the following two properties are valid:
\begin{enumerate}
\item[$(h)$]  it satisfies clause (3)(g), furthermore the quotient
  $G^\perp_{\bar\eta,n,w}$ is $\theta$-free,

\item[$(i)$]  $\mathbf x$ is $\theta$-free.
\end{enumerate}
\item
 $\gx$   is called well
	orderable, when we can find $\bar\Lambda$ so that:

\begin{enumerate}
	\item[$(\alpha)$]  $\bar\Lambda = \langle \Lambda_\alpha:\alpha \le
	\alpha_*\rangle$ is $\subseteq$-increasing and continuous,
	
	\item[$(\beta)$]  $\Lambda_{\alpha_*} = \Lambda_{\mathbf x}$
	and $\Lambda_0 = \emptyset$,
	
	\item[$(\gamma)$]   if $\bar\eta \in \Lambda_{\alpha +1} \setminus
	\Lambda_\alpha$ and $m < \mathbf k_{\bold x}$ then
	$$\big\{i < \partial_m:(\exists
	\bar\nu \in \Lambda_\alpha)(\bar\eta \upharpoonleft (m,i) = \bar\nu
	\upharpoonleft (m,i)\big\} \in J_{\mathbf x,m},$$
	
	\item[$(\delta)$]  if $\bar\eta \in \Lambda_{\alpha +1} \setminus
	\Lambda_\alpha$ then $\bigcup\{M: M \in Z_{\bar\eta}\} \subseteq \langle
	\bigcup\{G_{\bar\nu}:\bar\nu \in \Lambda_\alpha\}\rangle_G$.
\end{enumerate}\end{enumerate}
\end{definition}

Now, we are ready to confirm
{Question} \ref{q14}:

\begin{theorem}
	\label{d64n} Assume $0<\mathbf{k}<\omega$ and $\bar\partial$ of length $\mathbf k$ are given.
Let $\mathbf x$ be a $\mathbf k$-combinatorial
	$\bar\partial$-parameter, $R=R_{\mathbf x}$ and  let  $(\chi, \bold d, \Xi)$ be a module problem. Suppose $\mathbf x$
	$\theta$-fits the triple $(\chi,\mathbf d,\Xi),\chi^+ \ge \theta + |R|^+$ and $\mathbf x$
	has $\chi$-black box. The following assertions are valid:
	\begin{enumerate}
		\item There is $\gx$ such that:
		\begin{enumerate}
			\item[(a)]  $\gx$ is an $(R,\mathbf x)$-construction,
			\item[(b)]  $G = G_{\gx}$ is an $R$-module of cardinality
			$|\Lambda_{\mathbf x}|$,
			
			\item[(c)]  if $(G_*,H_*,h_*) \in \Xi,$ and  $h_0 \in
			\Hom_R(G_*,G)$ is an embedding, then there is no $h_1 \in \Hom_R(G ,H_*)$ such that
			$h_0 \circ h_1 = h_*$: $$\xymatrix{
				& 0 \ar[r]&G_*\ar[r]^{h_0}\ar[d]_{h_*}&G\ar[dl]^{\nexists h_1}\\
				&& H_*
				&&&}$$
			
			\item[(d)]  $\gx$ is simple.
		\end{enumerate}
		
		\item Suppose in addition to the first item, $\mathbf x$ freely $\theta$-fits  the triple $(\chi,\mathbf d,\Xi)$.
		Then we can add the following two properties:
		
		\begin{enumerate}
			\item[(e)]  $G$ is $\sigma$-free if $\mathbf x$ is $\sigma$-free,
			
			\item[(f)] $\Hom_R(G,H_{*}) = 0$, for all $H_*$ such that $(G_*,H_*,h_*) \in \Xi$.
		\end{enumerate}
	\end{enumerate}
\end{theorem}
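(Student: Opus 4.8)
The plan is to build $\gx$ by a transfinite construction of length $|\Lambda_{\mathbf x}|$, using the $\chi$-black box to "predict" all potential non-extendable homomorphism problems in advance. First I would fix the partition $\bar\Lambda = \langle \Lambda_\alpha : \alpha < |\Lambda_{\mathbf x}| \rangle$ and the listing $\langle \bar\nu_\alpha : \alpha < |\Lambda_{\mathbf x}| \rangle$ from Definition \ref{a99}, together with the pre-black boxes on each $\mathbf x \rest \Lambda_\alpha$. The key point is that via the identification in Definition \ref{a9}(3) and the cardinality bounds in Definition \ref{d61n} (all of $\Xi$, all $M \in \cM_{\mathbf d}$, $M^*_{\mathbf d}$, and all $(G_*,H_*,h_*)$ have size $\le \chi$), one can code, for each $\bar\eta \in \Lambda_{\mathbf x}$, a candidate triple $(G_*,H_*,h_*) \in \Xi$ together with a candidate embedding $h_0 : G_* \to G^{\gx}_*$ and a candidate partial homomorphism $h_1 : G_{\bar\eta} \cap G^{\gx}_* \to H_*$ — i.e. exactly the data appearing in clause (B) of Definition \ref{d60n}. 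The black box guarantees that for any \emph{actual} such configuration that arises on the final module $G$, there is some $\bar\eta$ at which the predicted data agrees with the real data on the relevant generators.

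The construction itself proceeds by recursion on $\alpha < |\Lambda_{\mathbf x}|$, building an increasing continuous chain of $R$-modules. At the generator-set level we set $G^{\gx}_* = \bigoplus_{M \in X_*} M$ as in Definition \ref{d62n}(b), with $X_*$ the family of semi-$\mathbf d$-free modules indexed by the nodes $\bar\eta \rest (m,i)$. At stage $\alpha$, for each $\bar\eta \in \Lambda_\alpha$ we need to define the module $G_{\bar\eta}$ extending the submodule generated by the relevant $M_{\bar\eta \rest (m,i)}$'s together with $Z_{\bar\eta} = \{M^*_{\mathbf d}\}$ (to make $\gx$ simple, clause (d)): if the black box prediction at $\bar\eta$ gives a genuine $(G_*,H_*,h_*) \in \Xi$, a partial embedding $h_0$ into what has been built so far, and a partial $h_1$ into $H_*$ making the triangle of Definition \ref{d60n}(B)(f) commute, then we invoke the $\theta$-fitness hypothesis to obtain $G_1 \supseteq G_0$ of size $< \theta$ with \emph{no} homomorphism $G_1 \to H_*$ extending $h_1$, and we let $G_{\bar\eta}$ absorb this $G_1$; otherwise we just take the free/semi-free choice dictated by Definition \ref{d62n}(f). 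The tree-likeness and weak ordinariness of $\Lambda_{\mathbf x}$, plus clause (4) of Definition \ref{a99} (the $\mod J_{\bold k -1}$ domination of the $\bar\nu_\alpha$'s), ensure that these local modifications at distinct $\bar\eta$ do not interfere — the supports are "almost disjoint" in the sense needed for the direct-sum decomposition $G/G_*^{\gx} = \bigoplus_{\bar\eta}(G_{\bar\eta}+G_*^{\gx})/G_*^{\gx}$ of Definition \ref{d62n}(d) to hold. This yields items (a), (b), (d), and the cardinality count $|G| = |\Lambda_{\mathbf x}|$ follows since each $G_{\bar\eta}$ has size $< \theta \le \chi^+$ and there are $|\Lambda_{\mathbf x}|$ of them.

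For clause (c), suppose toward contradiction that $(G_*,H_*,h_*) \in \Xi$, $h_0 : G_* \hookrightarrow G$ is an embedding, and $h_1 : G \to H_*$ satisfies $h_0 \circ h_1 = h_*$. By the $\chi$-black box applied to the functions coding $(h_0, h_1\rest(\text{generators}))$ — here is where one uses that the relevant homomorphisms, having range in objects of size $\le \chi$, are themselves coded by elements of $\prod_m {}^{\Lambda_m}\chi_m$ — there is some $\bar\eta \in \Lambda_{\mathbf x}$ at which the prediction was correct. But then at the stage where $G_{\bar\eta}$ was defined we explicitly ensured, via $\theta$-fitness, that $h_1 \rest G_{\bar\eta}$ \emph{cannot} be extended to all of the corresponding $G_1 \subseteq G_{\bar\eta}$ while mapping into $H_*$ — contradicting the existence of $h_1$ on all of $G$. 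For part (2): if $\mathbf x$ freely $\theta$-fits the triple, then the $G_1$ produced at each step additionally satisfies the freeness clause $(\delta)$ of Definition \ref{d60n}(2), so each $G_{\bar\eta}$ decomposes as in Definition \ref{d691n}(3)(g), making $\gx$ a locally free derivation; combined with $\theta$-freeness of $\mathbf x$ (when $\mathbf x$ is $\sigma$-free) this gives $\sigma$-freeness of $G$ by the standard argument that a $\sigma$-small subset of $G$ is captured by a $\sigma$-small subtree of $\Lambda_{\mathbf x}$ over which $G$ is free (clause (e)); and (f) $\Hom_R(G,H_*)=0$ follows from (c) by taking $h_* = h_0 = \mathrm{id}$ when $(G_*,H_*,h_*)$ can be arranged with $G_* \hookrightarrow G$ a canonical submodule, forcing every $h \in \Hom_R(G,H_*)$ to vanish.

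The main obstacle I anticipate is the bookkeeping in the recursion that makes the local modifications at different $\bar\eta$'s compatible with the global direct-sum structure of Definition \ref{d62n} — specifically, verifying that inserting the fitness-produced $G_1$ at node $\bar\eta$ (which introduces new elements related to the $M_{\bar\eta\rest(m,i)}$ along the whole branch) does not destroy the decomposition $G/G^{\gx}_* = \bigoplus_{\bar\eta}(\ldots)$ or the intersection law in clause (f). This is exactly the role of the black box's clause (4) (the $J_{\bold k-1}$-domination) and the tree-like/normal structure of $\Lambda_{\mathbf x}$: they guarantee that when $\bar\eta$ is processed, all the nodes its modification depends on are either already "finalized" or never again touched, so one is really doing a well-founded recursion. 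Getting this "obeying $\bar\Lambda$" condition (Definition \ref{d691n}(6)) to hold simultaneously with simplicity and with the no-extension property is the technical heart of the argument, and it closely mirrors — but must generalize, because the generators are now semi-$\mathbf d$-free modules rather than single elements $x_{\bar\eta\rest(m,i)}$ — the corresponding construction in \cite[\S2]{Sh:1028}.
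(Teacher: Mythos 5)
Your treatment of part (1) is essentially the paper's argument: one enumerates in a list of length $\chi$ the possible data $(\alpha,\mathbb G,\mathbb H,h,g,f)$ occurring in Definition \ref{d60n}(B), lets the $\chi$-black box predict at each $\bar\eta$ which configuration will arise, applies $\theta$-fitness to produce $G^1_{\bar\eta,\alpha_\varepsilon}\supseteq G^0_{\bar\eta,\alpha_\varepsilon}$ admitting no extension of the predicted partial homomorphism into $H_\varepsilon$, and then catches any alleged global $h_1$ with the black box. Your recursion along the partition $\langle\Lambda_\alpha\rangle$, the choice $Z_{\bar\eta}=\{M^*_{\mathbf d}\}$ for simplicity, and the appeal to the freeness clause $(\delta)$ for clause (e) all match the paper's route.

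The genuine gap is in clause (f). You assert that $\Hom_R(G,H_*)=0$ ``follows from (c) by taking $h_*=h_0=\id$.'' It does not: clause (c) only excludes homomorphisms $h_1\colon G\to H_*$ that extend, along some embedding $h_0$ of some $G_*$ with $(G_*,H_*,h_*)\in\Xi$, the \emph{specific} map $h_*$ attached to that triple. An arbitrary nonzero $h\in\Hom_R(G,H_*)$ need not satisfy $h\circ h_0=h_*$ for any embedded copy of any $G_*$; for instance, for $\Xi^1_p$, where $h_*=\id_{G_*}$, your reduction would require $h$ to split off an embedded copy of $G_*$, which is far from automatic. The paper proves (f) by a different mechanism: an increasing continuous chain $\langle M_i:i\le\theta\rangle$ in which at each successor stage one enumerates \emph{all} tuples $(G_*,H_*,h_*,g,M)$ with $M\subseteq_R M_j$, $g\colon M\cong G_*$ and $(G_*,H_*,h_*)\in\Xi$, and uses the \emph{free} fitting to extend each such $M$ to some $N_{j,\alpha}$, $\theta$-free over $M$, admitting no homomorphism into $H_*$ extending $h_*\circ g$. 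Then, for a nonzero $h\colon G=M_\theta\to H_*$, regularity of $\theta>\|G_*\|$ yields a stage $i=j+1<\theta$ at which $h\rest M_{j,\alpha}$ is nonzero and coincides with one of the enumerated maps $h_{j,\alpha}\circ g_{j,\alpha}$, giving the contradiction. This closure-under-all-embedded-copies step, together with the role of free fitting in keeping each extension $\theta$-free over its base (which is what makes (e) and (f) hold simultaneously), is the ingredient missing from your proposal.
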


\begin{proof}
(1) We are going to define
$$\gx=\langle R,X_*, G_*, G, \langle M_{\bar\eta}:\bar\eta \in
	\Lambda_{\mathbf x, X_*,< \mathbf k_{\mathbf x}}\rangle, \langle
	G_{\bar\eta},Z_{\bar\eta}:\bar\eta \in \Lambda_{\mathbf x}\rangle \rangle,$$equipped with the requested property.
Recall that the module parameter $\mathbf d$ is of the form $\langle  R_{\mathbf d}, M_{\mathbf d}^*, \cM_{\mathbf d}, \theta_{\mathbf d}      \rangle$.
For
every $\bar\eta \in \Lambda_{\mathbf x}$, we set
$Z_{\bar\eta} := \{M^*_{\bold d}\}$ and $R:=R_{\mathbf x}$.
Let $\langle M_{\mathbf d, \alpha} :\alpha < \alpha_{\mathbf d} \le \chi\rangle$ enumerate $\cM_{\mathbf d}$. For $\bar\eta \in \Lambda_{\mathbf x}$, $m< \mathbf{k}_{\mathbf x}$, and $i< \partial_m$, set
	\[
	M_{\bar\eta \upharpoonright (m,i)}:=\bigoplus\{M_{\bar\eta \upharpoonright (m,i), \alpha}: \alpha < \alpha_{\mathbf d} \},
	\]
	where for each $\alpha$ as above, $M_{\bar\eta \upharpoonright (m,i),\alpha} \cong M_{\mathbf d,\alpha}$. Let also $f_{\bar\eta  \upharpoonright (m,i),\alpha}$ be an isomorphism from $M_{\mathbf d,\alpha}$ onto $M_{\bar\eta \upharpoonright (m,i),\alpha}$. We next define:
	
	\begin{itemize}
		\item[] $X_* = \{M_{\bar\eta \upharpoonright (m,i)}:m < \mathbf k_{\mathbf x}, i < \partial_m, \bar\eta \in \Lambda_{\mathbf x}\} \cup \{M^*_{\mathbf d} \},$
		\item [] $G_* = \bigoplus\{M_{\bar\eta,\alpha}:\alpha < \alpha_{\mathbf d}, \bar\eta \in \Lambda_{\mathbf x,< \mathbf k}\} \oplus M^*_{\mathbf d}.$
	\end{itemize}
Let the collection $\{(\alpha_\varepsilon,G_\varepsilon, H_\varepsilon,h_\varepsilon, g_\varepsilon, f_\varepsilon): \varepsilon < \chi\}$ list, possibly with repetitions, all tuples $(\alpha,\mathbb{G},\mathbb{H},h,g, f)$ satisfying:
\begin{itemize}
		\item[$(i)$] $\alpha < \alpha_{\mathbf d},$
		\item[$(ii)$] $(\mathbb{G},\mathbb{H},h) \in \Xi,$
		\item[$(iii)$] $g \in \text{Hom}_R(M_{\mathbf d,\alpha}, \mathbb{H}),$
		\item[$(iv)$] $f \in \text{Hom}_R(\mathbb{G}, M^*_{\mathbf d}).$
	\end{itemize}
Let $\mathbf b$ be a $\chi$-black box for $\mathbf x$ (see Definition \ref{a9}). There are  $\mathbf b'$ and $\mathbf b''$ such that:
	\[
	\mathbf b_{\bar\eta}(m,i) = \pr(\mathbf b'_{\eta}(m,i), \mathbf b''_{\bar\eta}(m,i)),
	\]
where $\pr(-,-)$ denotes a pairing function.
For $\bar\eta \in \Lambda_{\mathbf x}$ and $\alpha < \alpha_{\mathbf d}$, set
\[	G^0_{\bar\eta, \alpha} = \sum\{M_{\bar\eta \upharpoonright (m,i), \alpha}:  m < \mathbf k, i < \partial_m\} \oplus M^*_{\mathbf d} \subseteq G_*.
	\]	
Now, suppose $\varepsilon < \chi$. Define $F_{\bar\eta, \varepsilon}: G^0_{\bar\eta, \alpha_\varepsilon} \to H_\varepsilon$ such that for each $m < \mathbf k$ and $i < \partial_m$,
	\[
	y \in M_{\mathbf d,\alpha_\varepsilon} \Rightarrow h_{\bar\eta}(F_{\bar\eta  \upharpoonright (m,i),\alpha_\varepsilon}(y)) = g_{\varepsilon}(y) \in H_\varepsilon.
	\]
	
We apply the property supported by Definition \ref{d60n} to the data $(G_\varepsilon, H_\varepsilon, h_\varepsilon)$, together with $f_\varepsilon$, $G^0_{\bar\eta, \alpha_\varepsilon}$, and $F_\varepsilon$. This allows us to conclude that, assuming $F_\varepsilon \circ f_\varepsilon = h_\varepsilon$, there exists some $G^1_{\bar\eta, \alpha_\varepsilon}$ extending $G^0_{\bar\eta, \alpha_\varepsilon}$ such that no homomorphism from $G^1_{\bar\eta, \alpha_\varepsilon}$ into $H_\varepsilon$ extends $F_\varepsilon$:
$$\xymatrix{
		&&G_\varepsilon\ar[r]^{f_\varepsilon}\ar[dr]_{h_\varepsilon}& M^*_{\bold d} \ar[r]^{\subseteq}& G^0_{\bar\eta, \alpha_\varepsilon} \ar[r]^{\subseteq}\ar[d]_{F_{\bar\eta, \varepsilon}}&G^1_{\bar\eta, \alpha_\varepsilon}\ar[dl]^{\nexists }\\
		&&& H_\varepsilon\ar[r]^{=}& H_{\varepsilon}
		&&&}$$
	
Finally, let $G_{\bar\eta}$ be freely generated  by $\bigcup\limits_{\varepsilon < \chi}G^1_{\bar\eta, \alpha_\varepsilon}$ excepted with relations coming from them. These define the $(R,\mathbf x)$-construction $\gx$. It remains to show that it is as required. Suppose not. Then we can find some $(G_*, H_*, h_*) \in \Xi$ and an embedding $h_0 \in \text{Hom}_R(G_*, G)$ such that there exists $h_1: G \to H_*$ satisfying $h_1 \circ h_0=h_*$:$$\xymatrix{
		& 0 \ar[r]&G_*\ar[r]^{h_0}\ar[d]_{h_*}&G\ar[dl]^{\exists h_1}\\
		&& H_*
		&&&}$$
Thanks to the black box, we can find $\bar\eta$, $m$, and $i$ such that if we set $\varepsilon=\mathbf b_{\bar\eta(m, i)}$, then
	\[
	(G_*, H_*, h_*, g, f):=(G_\varepsilon, H_\varepsilon, h_\varepsilon, g_\varepsilon, f_\varepsilon),
	\]
	where $g_\varepsilon, f_\varepsilon$ are chosen so that $h_1\rest \circ f_{\bar\eta  \upharpoonright (m,i),\alpha_\varepsilon}=g_\varepsilon$ and $f_\varepsilon=\pi \circ h_0$ with the convention that $\pi: G \to M^*_{\mathbf d}$ is the projection map. Let us summarize these with diagrams:$$\xymatrix{
&M_{\mathbf d,\alpha_\varepsilon}\ar[rr]^{f_{\bar\eta  \upharpoonright (m,i),\alpha_\varepsilon}}\ar[d]_{g_\varepsilon}&&M_{\bar\eta \upharpoonright (m,i),\alpha}\ar[d]_{\subseteq}\\
			& H_*&&G\ar[ll]_{h_1}
		}\xymatrix{
			&G_\varepsilon\ar[r]^{=}\ar[d]_{g_\varepsilon}&  G_*\ar[d]^{h_0}\\
			& M_{\mathbf d,\alpha_\varepsilon}&G\ar[l]_{\pi}
			&&&}$$
Therefore, $h_1 \restriction G^1_{\bar\eta, \alpha_\varepsilon}$ extends $F_{\bar\eta, \varepsilon}$:$$\xymatrix{
	&&&G^0_{\bar\eta, \alpha_\varepsilon}\ar[r]^{F_{\bar\eta, \varepsilon}}\ar[d]_{\subseteq}&  H_\ast\ar[d]^{=}\\
		&&& G^1_{\bar\eta, \alpha_\varepsilon}\ar[r]^{h_1 \restriction }&H_\ast
		&&&}$$

This is in 	 contradiction with the definition of $G^1_{\bar\eta, \alpha_\varepsilon}$.

	(2) Clause (e) is essentially \cite[Claim 2.12]{Sh:1028}, so
 we only show $(f)$. Recall $\theta = \cf(\theta)$ is $> \|G_*\|$ whenever
	$(G_*,H_*,h_*) \in \Xi$.  By induction on $i \leq \theta$, we choose an increasing and continuous chain $\langle M_i: i \leq \theta \rangle$
	of elements of $\mathbf K_{\mathbf d}$ as follows:
	\begin{itemize}
		\item[$(i)$] For $i=0$, let $M_0$ be the expansion of $M^*_{\mathbf d}$ by $c^M=c$ for
		$c \in M^*$.
		
		\item[$(ii)$] For $i$ a limit ordinal, let $M_i = \bigcup\{M_j:j <i\}$.
		
		\item[$(iii)$] For $i=j+1$, let  $\big\langle
		(G_{j,\alpha},H_{j,\alpha},h_{j,\alpha},g_{j,\alpha},M_{j,\alpha}):\alpha
		< \alpha_j\big\rangle$  be an enumeration of the set
		$\cM_i$ consisting of all tuples
		$(G_*,H_*,h_*,g,M)$
		where
		 	\begin{itemize}
		 	\item  $M \subseteq_R
			M_j$,
			 	\item $g:M\stackrel{\cong}\longrightarrow G_*$,
			 	\item $(G_*,H_*,h_*) \in
			\Xi.$	\end{itemize}
For each $\alpha < \alpha_j$, let  $N_{j,\alpha} \in \mathbf K_{\mathbf d}$ be such that it
		extends $M_{j,\alpha}$  such that $N_{j,\alpha}$ is $\theta$-free over
		$M_{j,\alpha}$ in $\mathbf K_{\mathbf d}$, and there is no $h \in
		\Hom_R(N_{j,\alpha},H_{j,\alpha})$ extending $h_{j,\alpha} \circ
		g_{j,\alpha}$. We have the following commutative diagram:
		$$\xymatrix{
			&& M_{j,\alpha} \ar[d]_{\subseteq}\ar[r]^{g_{j,\alpha}}& G_{j,\alpha}\ar[d]^{h_{j,\alpha}}&\\
			&& N_{j,\alpha}\ar[r]^{\nexists h}& H_{j,\alpha}
			&&&}$$
	\Wilog \, $N_{j,\alpha} \cap M_j = M_{j,\alpha}$ and $\langle
		N_{j,\alpha} \backslash M_{j,\alpha}:\alpha < \alpha_j\rangle$ are
		pairwise disjoint. Let $M_i$ be the $R$-module generated by
	 $\bigcup\{N_{j,\alpha}:\alpha < \alpha_j\},$  freely except that it
		extends $M_j$ and $N_{j,\alpha}$ for $\alpha < \alpha_j$.\end{itemize}
We will show that \( G := M_\theta \) satisfies the required properties. To see this, let \( (G_*, H_*, h_*) \in \Xi \), and suppose, for the sake of contradiction, that there exists a nonzero homomorphism \( h: G \to H_* \).
Due to the construction, we can find some \( i < \theta \) such that \( h \restriction M_{i+1} \) is nonzero. This implies that there exists some \( \alpha < \alpha_j \) such that \( h \restriction M_{j, \alpha} \) is nonzero. Moreover, we have
\[
h \restriction M_{j, \alpha} = h_{j, \alpha} \circ g_{j, \alpha}.
\]
Since \( h \restriction M_{i+1} \) extends \( h_{j, \alpha} \circ g_{j, \alpha} \), and given the way \( M_{i+1} \) was defined, we obtain a contradiction—completing the proof.
\end{proof}

\section {Trivial duality around reduced separable  $p$-groups} \label{1B}The main result
of this section is Theorem \ref{d311}, also  we explore how the class $\mathbf{K}_p$ integrates into the framework developed thus far; see, for example, Corollary \ref{d73} and
\ref{d71}.
  Among separable abelian $p$-groups, the most notable class—besides the class of direct sums of cyclic $p$-groups—is the class of torsion-complete $p$-groups, which we now introduce.

\begin{definition}
	\begin{enumerate}
	\item	Given an abelian group $G$ such that
	$\bigcap_{n\in\mathbb{N}} {nG}=\{0\}$,
	let $G^{\widehat{ }}$ denote its $\mathbb{Z}$-adic completion.	To apply this for $p$-groups, suppose $G$ is such that
	$\bigcap_{n\in\mathbb{N}} {p^nG}=\{0\}$,
		let $ G ^{\widehat{ }_p}$ denote its $p$-adic completion.	\item
		Given $n \in \mathbb{N}$ and a cardinal $\kappa$,  let $B_{n, \kappa}$ denote the group $\bigoplus_{i<\kappa} \frac{\mathbb{Z}}{ p^n \mathbb{Z}}$.
		\item
	By a torsion-complete abelian $p$-group, we mean    the torsion subgroup of the group $(\bigoplus_{n\in\mathbb{N}}B_{n, \kappa_n})^{\widehat{ }_p}$,
		for some sequence $(\kappa_n)_{n \in \mathbb{N}}$ of cardinals.
	\end{enumerate}
\end{definition}

\begin{definition}
\label{d65}
Suppose $p$ is a prime number.
\begin{enumerate}
\item Let $\mathbf d^0_p=\langle  R, M^*, \cM, \theta     \rangle$ be  defined via:
\begin{enumerate}
\item[$(a)$]  $R$ is the ring $\bbZ$ of integers, so an $R$-module is an abelian
  group,

\item[$(b)$]  $M^*$ is the zero $R$-module,

\item[$(c)$]  $\cM =\{\bbZ/p^n \bbZ:n=1,2,\ldots\}$,

\item[$(d)$]  $\theta =\aleph_1$.
\end{enumerate}

\item Let $\Xi^0_p,$ be the class of triples $(G,H,h)$ such that:

\begin{enumerate}
\item[$(a)$]  $G \in \mathbf {K}_p^{[rs]}$ has cardinality $\le
  2^{\aleph_0}$ and is not torsion-complete,

\item[$(b)$]  $H$ has the form $\bigoplus\{G_n:n \in \cU\}$ where $\cU
  \subseteq \bbN$ is infinite and $G_n \cong \bbZ/p^n \mathbb{Z}$,

\item[$(c)$]  $h$ is a non-small homomorphism from $G$ to $H$, or just $H$ has no infinite subgroup which is torsion complete.
\end{enumerate}
\end{enumerate}
\end{definition}

The next routine fact shows that $\mathbf d^0_p$ and $\Xi^0_p$ are indeed as required.
\begin{fact}
\label{d67n}
Let $p$ be prime.
Then  $\mathbf d^0_p$ is a module parameter,
 and $\Xi^0_p$ is a $\mathbf d^0_p$-problem.
\end{fact}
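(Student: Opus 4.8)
The plan is simply to unwind the two relevant definitions---Definition~\ref{d54n} (module parameter) and Definition~\ref{d57n} ($\mathbf d$-problem)---and check their clauses in turn; there is essentially nothing to prove beyond parsing the notation.

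For the first assertion, I would run through clauses (a)--(d) of Definition~\ref{d54n} for $\mathbf d^0_p = \langle R, M^*, \cM, \theta\rangle$, with the data as spelled out in Definition~\ref{d65}(1): $R = \bbZ$ is a ring (so an $R$-module is just an abelian group); $M^* = 0$ is trivially a $\bbZ$-module; $\cM = \{\bbZ/p^n\bbZ : n \geq 1\}$ is a countable set each of whose members is a $\bbZ$-module; and $\theta = \aleph_1$ is a regular cardinal with $\aleph_1 \geq \aleph_0$. Hence $\mathbf d^0_p$ is a module parameter. For the second assertion, I would verify clauses $(\alpha)$ and $(\beta)$ of Definition~\ref{d57n} for an arbitrary triple $(G,H,h) \in \Xi^0_p$. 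Clause $(\alpha)$ is immediate: by clause (a) of Definition~\ref{d65}(2), $G \in \mathbf{K}_p^{[rs]} \subseteq \mathbf{K}_p$ is an abelian $p$-group, hence a $\bbZ$-module, and by clause (b), $H = \bigoplus\{G_n : n \in \cU\}$ with $G_n \cong \bbZ/p^n\bbZ$ is again an abelian group, hence a $\bbZ$-module. For clause $(\beta)$, I would observe that by clause (c) the map $h\colon G \to H$ is non-small, and that a non-small homomorphism is automatically nonzero---the zero map has kernel all of $H$ and so trivially satisfies the Pierce condition of Definition~\ref{d52n}(2), hence is small.

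The only mildly delicate points, which I would flag explicitly, are these. First, the alternative reading of clause (c) of Definition~\ref{d65}(2)---``$H$ has no infinite subgroup which is torsion complete''---never actually occurs for an $H$ of the prescribed form, since such an $H$ contains the infinite bounded (hence, being $p$-adically complete, torsion-complete) subgroup $\bigoplus\{G_n[p] : n \in \cU\} \cong B_{1,\aleph_0}$; so in every case clause (c) yields a nonzero $h$, and clause $(\beta)$ holds. Second, for $\Xi^0_p$ to be literally a \emph{set} as Definition~\ref{d57n} requires, one invokes the cardinality bounds $|G| \leq 2^{\aleph_0}$ (clause (a)) and $|H| \leq \aleph_0$ (clause (b)), together with the usual convention of taking representatives on a fixed underlying set of size $2^{\aleph_0}$. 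I expect no real obstacle: the entire content of the Fact is the one-line implication ``non-small $\Rightarrow$ nonzero'' together with this bookkeeping.
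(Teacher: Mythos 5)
Your verification is correct, and it matches the paper exactly in spirit: the paper states Fact~\ref{d67n} without proof, declaring it routine, and your clause-by-clause unwinding of Definitions~\ref{d54n} and~\ref{d57n} is precisely the intended (omitted) argument. Your two flagged points --- that non-small implies nonzero, and that the ``no infinite torsion-complete subgroup'' disjunct in clause (c) is vacuous for $H$ of the prescribed form since $\bigoplus\{G_n[p]:n\in\cU\}$ is an infinite bounded, hence torsion-complete, subgroup --- are both accurate and are the only places where anything could go wrong.
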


\begin{proposition}
\label{d31}
Let $0<\mathbf{k}<\omega$ be given, and assume
	$J =J^{\bd}_{\aleph_0 }$ is the ideal of bounded subsets of $\omega$. Then
 $(\aleph_0, J)$ freely fits the triple $(\aleph_0, \mathbf d^0_p, \Xi^0_p)$.
\end{proposition}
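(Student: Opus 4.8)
The plan is to verify Definition \ref{d60n} directly for the pair $(\aleph_0,J)$, where $\bar\partial$ is constantly $\omega$ and each $J_m=J^{\bd}_{\aleph_0}$. The bookkeeping clauses (A) are immediate (that $(\aleph_0,\mathbf d^0_p,\Xi^0_p)$ is a module problem is essentially Fact \ref{d67n}), so the whole content is clause (B) together with the freeness clause $(\delta)$. Concretely: given any $(G_*,H_*,h_*)\in\Xi^0_p$ (so $G_*\in\mathbf{K}_p^{[rs]}$ is not torsion-complete and $h_*$ is non-small --- the operative alternative in the definition of $\Xi^0_p$) and any configuration $(M_*,\bar M,G_0,h_0,h_1)$ meeting (B)(a)--(f) with $\theta=\theta_{\mathbf d^0_p}=\aleph_1$ --- so that $M_*$, the scaffold $\bigoplus_{m,i}M_{m,i}$ and hence $G_0=\bigoplus_{m,i}M_{m,i}\oplus M_*$ are all countable and $h_*=h_1\circ h_0$ --- I must build a countable abelian $p$-group $G_1\supseteq G_0$ with (i) no $f\in\Hom(G_1,H_*)$ extends $h_1$, and (ii) for every $m<\mathbf k$ and every finite $u\subseteq\omega$, $G_1=\big(\bigoplus_{\ell<\mathbf k}\{M_{\ell,i}:\ell=m\Rightarrow i\in u\}\oplus M_*\big)\oplus N_{m,u}$ with $N_{m,u}$ a direct sum of cyclic $p$-groups. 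The point to keep in mind is that for fixed $m$ the distinguished submodule in (ii) omits from $G_0$ only the cofinite tail $\{M_{m,i}:i\notin u\}$ of the $m$-th scaffold block; that tail is what the new relations will be ``routed through''.

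The algebraic ingredient is the following: since $h_*$ is non-small (and $G_*$ separable reduced), there is a $p$-adically Cauchy sequence $\langle d_n:n<\omega\rangle$ in $G_*$ with no limit in $G_*$ and such that $\langle h_*(d_n):n<\omega\rangle$ has no limit in $H_*$. Indeed $h_*$ is automatically $p$-adically continuous; were every Cauchy sequence in $G_*$ to have a convergent $h_*$-image, the assignment $\lim d_n\mapsto\lim h_*(d_n)$ would be well defined (since $H_*$ is reduced separable) and would yield a homomorphism $\overline{h_*}\colon\overline{G_*}\to H_*$ extending $h_*$, where $\overline{G_*}$ is the torsion-completion and $G_*$ is dense in it; but $\overline{G_*}$ is torsion-complete, so by a theorem of Pierce every homomorphism from it into a direct sum of cyclic $p$-groups is small, and restrictions of small homomorphisms are small --- contradicting non-smallness of $h_*$. (This also explains why ``$G_*$ not torsion-complete'' is compatible with the other hypotheses of $\Xi^0_p$: non-smallness already forces it.) Writing $d_{n+1}-d_n=p^n e_n$ with $e_n\in G_*$, and using that $\overline{G_*}$ is a $p$-group, the limit $c:=\lim d_n\in\overline{G_*}\setminus G_*$ has finite order.

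I then build $G_1$ by adjoining to $G_0$ a countable family of new generators with relations that (a) encode a new ``divisor chain'' whose would-be common value is $h_0(c)$ --- so that applying any extension $f$ of $h_1$ forces a single element $f(y)\in H_*$ to be congruent to $h_*(d_n)$ modulo $p^nH_*$ for every $n$, hence a $p$-adic limit of $\langle h_*(d_n)\rangle$, which does not exist --- and (b) spread the ``error terms'' of this chain over cofinitely many summands $M_{m,i}$ in every coordinate $m<\mathbf k$, those error terms being of $p$-adic height $\ge n$ at stage $n$ (so they do not disturb the congruences in (a)) and arranged so that, after discarding any single cofinite tail $\{M_{m,i}:i\notin u\}$, the discarded tail together with the new generators reorganizes into a direct sum of cyclic $p$-groups complementing the distinguished submodule. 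One then checks: that all new generators have finite order, so $G_1\in\mathbf K_p$; that the partial sums of the chain genuinely fail to converge inside $G_0$ --- which they do, since $h_0(d_n)$ itself diverges in $G_0$ (otherwise $h_*(d_n)=h_1(h_0(d_n))$ would converge) --- so no element of infinite height is created and $G_1$ is again a direct sum of cyclic $p$-groups; and finally that (i) and (ii) hold. This establishes that $(\aleph_0,J)$ freely $\aleph_1$-fits --- hence, by the $\theta$-suppressing convention, freely fits --- the triple $(\aleph_0,\mathbf d^0_p,\Xi^0_p)$.

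The main obstacle I anticipate is exactly the simultaneous realization of (i) and (ii): the obstruction produced in the second paragraph lives ``over $M_*$'' (the $e_n$ come from $G_*$ and are carried by $h_0$ into $M_*$), whereas $M_*$ belongs to the distinguished submodule for every $m$ and $u$, so on its face the new divisor chain is tied to that submodule and not routable into the semi-free complement $N_{m,u}$. Making the construction work therefore requires choosing the relations so that this ``tethering to $M_*$'' is carried entirely by a part that gets absorbed, coordinate by coordinate, into the cofinite scaffold tails --- and in particular one must cope with the possibility that $h_0$ itself is not small (so that $h_0$ does not extend to $\overline{G_*}$), which is precisely what makes the naive ``adjoin the missing limit'' construction clash with clause $(\delta)$. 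Obtaining the bad Cauchy sequence of the second paragraph with sufficient control on the order of its limit is the other, more routine but still delicate, point.
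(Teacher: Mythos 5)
Your approach is, at its core, the same as the paper's: both proofs pass to the torsion ($p$-adic) completion of $G_*$ and use non-smallness of $h_*$ to show that $h_*$ cannot be extended over a suitable dense pure extension with values still in $H_*$. The differences are in two sub-steps. First, where you invoke the Pierce--Fuchs theorem (homomorphisms from a torsion-complete group into a $\Sigma$-cyclic group are small) to produce a Cauchy sequence in $G_*$ whose $h_*$-image diverges, the paper argues more barehandedly: non-smallness yields elements $p^n g_n$ of bounded order with nonzero image, whence $\rng(\hat h_*)$ contains the $2^{\aleph_0}$ sums $\sum_n a_n p^n h_*(g_{n+m})$ and so cannot fit inside the countable $H_*$. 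The two devices certify the same fact; yours is shorter but quotes a classical theorem, the paper's is self-contained. Second, the paper simply takes $G_1$ with $G_*\subseteq G_1\subseteq_* \hat G_*$ and $\hat h_*[G_1]\not\subseteq H_*$, whereas you build $G_1$ over $G_0$ by a divisor chain routed through the scaffold $\bigoplus_{m,i}M_{m,i}$. Your version is the more faithful one: Definition \ref{d60n}(B) requires $G_1\supseteq G_0$ (not merely $G_1\supseteq G_*$), and the proposition asserts \emph{freely} fits, i.e.\ clause $(\delta)$, which the paper's proof never addresses. Your flagged obstacle about the chain being ``tethered to $M_*$'' is in fact harmless: $M_*$ lies inside the distinguished base for every $(m,u)$, so the increments $h_0(e_n)$ cost nothing in the quotient; the genuine remaining work is the order bookkeeping that makes the base split off with $\Sigma$-cyclic complement, exactly as carried out for the relations $(*)^1_{\alpha,n},(*)^2_{\alpha,n}$ in the proof of Theorem \ref{d311}. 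So your sketch, once those relations are written down, completes an argument that the paper itself leaves partially implicit.
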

\begin{proof}
By Fact \ref{d67n}, $\mathbf d^0_p$ is a module parameter,
and $\Xi^0_p$ is a $\mathbf d^0_p$-problem. In particular, clause (A) of Definition \ref{d60n} is satisfied. To check \ref{d60n}(B),
we proceed as follows. First, recall that since the groups under consideration are separable, so the completion operator is one-to-one. Let $(g_n)_{n\in\mathbb{N}}\in \hat{G_*}\subseteq \prod_n \frac{G_*}{p^nG_*}$. The assignment $(g_n)_{n\in\mathbb{N}}\mapsto (h_*(g_n))_{n\in\mathbb{N}}$ induces  a unique extension $\hat{h_*} \in \hom(\hat{G_*}, \hat{H_*})$ of $h_*$: $$\xymatrix{
			&& G_*\ar[d]_{\subseteq_{G_*}}\ar[r]^{h_*}&H_* \ar[d]^{\subseteq_{H_*}}&\\
			&& \hat{G_*}\ar[r]^{\hat{h_*}}& \hat{H_*}
			&&&}$$
		Since $h_\ast$ is not small, $\rng({h_*})\cong G_*/\ker({h_*}) $ is not  finite. But, $H_*$ is countable. Hence $|\rng({h_*})|={\aleph_0}$. We next claim that
$|\rng(\hat{h_*})|=2^{\aleph_0}$.
Indeed, as $h_\ast$ is not small, there are $k<\omega$ and $g_n$ for $n<\omega$ such that
\begin{itemize}
		\item[$-$]  $G_* \models`` p^k(p^ng_n)=0$'',
		
		\item[$-$] 	$ \hat{h_*}(p^ng_n)\neq 0$.
	\end{itemize}	So, given any $\overline{a}:=\langle a_0,\ldots, a_n,\ldots \rangle \in$$ ^{\omega}\mathbb{Z} $ and $m<\omega$, there is a well-defined element $$b_{\overline{a},m}:=\Sigma \{a_np^nh_\ast(g_{n+m}):n<\omega\}\in G_0 :=\rng(\hat{h_*}).$$Then
$\hat{H_*}$ has an infinite subgroup.
Since this group is independent of the choice of $\overline{a}$, we deduce that	$|\rng(\hat{h_*})|\geq 2^{\aleph_0}$. Since the reverse inequality is trivial, we get the desired claim.
In view of last claim, $\rng(\hat h_*) \nsubseteq H_*.$ Let us take  $G_1$ to be any group furnished with the following two properties:
\begin{itemize}
	\item[$(a)$]  $G_* \subseteq G_1 \subseteq_{*} \hat{G_*}$,
	
	\item[$(b)$] 	$\rng(\hat{h_*}\rest G_1) \nsubseteq H_*$.
\end{itemize}

Suppose by the way of contradiction  that there is an $h_1 \in \Hom (G_1 ,H_*)$ such that
such that the following diagram commutes:$$\xymatrix{
	& 0 \ar[r]&G_*\ar[r]^{\subseteq}\ar[d]_{h_*}&G_1\ar[dl]^{\exists h_1}\\
	&& H_*
	&&&}$$
Let us conveniently summarize the results with the following diagram:
$$\xymatrix{
	&& G_1 \ar[d]_{h_1}\ar[r]^{\subseteq_{G_1}}& \hat G_1 \ar[r]^{=}\ar[d]_{\hat h_1}&\hat G_{\ast}\ar[d]^{\hat h_{\ast}}\\
	&& H_{\ast}\ar[r]^{\subseteq_{H_*}}& \hat H_{\ast}\ar[r]^{=}&\hat H_{\ast}
	&&&}$$This is in contradiction with $(b)$. Thus, there is no such $h_1$, proving the frame is fit, as desired.
\end{proof}

\begin{definition}
\label{d65}
Suppose $p$ is a prime number.
\begin{enumerate}
\item Let $\mathbf d^1_p=\langle  R, M^*, \cM, \theta     \rangle$ be  defined via:
\begin{enumerate}
\item[$(a)$]  $R$ is the ring $\bbZ$ of integers,

\item[$(b)$]  $M^*$ is the zero $R$-module,

\item[$(c)$]  $\cM =\{\bbZ/p^n \bbZ:n=1,2,\ldots\}$,

\item[$(d)$]  $\theta =\aleph_1$.
\end{enumerate}

\item Let $\Xi^1_p,$ be the class of triples $(G_*,H_*,h_*)$ such that:

\begin{enumerate}
\item[$(a)$] $G_*$ is of the form
 $$G_*:=\bigoplus\{G^*_{m, \alpha}: m < \bold{k}, \alpha < \omega_1     \},$$
where $G^*_{m,\omega\alpha+n} \cong \mathbb{Z} / p^{n+1}\mathbb{Z}$.

\item[$(b)$]  $H_*:=G_*$,

\item[$(c)$]  $h_*:=\id_{G_*}$.
\end{enumerate}
\end{enumerate}
\end{definition}
\begin{fact}
\label{d67n1}
Let $p$ be prime.
Then $\mathbf d^1_p$ is a module parameter,
and also $\Xi^1_p$ is a $\mathbf d^1_p$-problem.
\end{fact}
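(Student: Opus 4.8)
The plan is simply to unwind Definitions \ref{d54n} and \ref{d57n}; no machinery beyond elementary abelian group theory is needed, exactly as for the companion statement Fact \ref{d67n}.

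First I would check that $\mathbf d^1_p = \langle \bbZ, 0, \{\bbZ/p^n\bbZ : n \ge 1\}, \aleph_1\rangle$ meets the four requirements of Definition \ref{d54n}: $\bbZ$ is a (commutative) ring, so clause $(a)$ holds; the zero group is a $\bbZ$-module, so clause $(b)$ holds; $\cM = \{\bbZ/p^n\bbZ : n = 1,2,\ldots\}$ is a set whose members are $\bbZ$-modules, so clause $(c)$ holds; and $\theta = \aleph_1$ is a regular cardinal $\ge \aleph_0$, so clause $(d)$ holds. Hence $\mathbf d^1_p$ is a module parameter with $R_{\mathbf d^1_p} = \bbZ$.

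Next I would verify that every triple $(G_*, H_*, h_*) \in \Xi^1_p$ satisfies clauses $(\alpha)$ and $(\beta)$ of Definition \ref{d57n} relative to $R_{\mathbf d^1_p} = \bbZ$. The one point worth a remark is that the index scheme is legitimate: every ordinal $\alpha < \omega_1$ has a unique representation $\alpha = \omega\beta + n$ with $\beta$ an ordinal and $n < \omega$, so the prescription $G^*_{m, \omega\beta + n} \cong \bbZ/p^{n+1}\bbZ$ assigns an unambiguous isomorphism type to each summand; thus $G_* = \bigoplus\{G^*_{m,\alpha} : m < \mathbf k, \alpha < \omega_1\}$ is a well-defined direct sum of finite cyclic $p$-groups, in particular an abelian group, i.e.\ a $\bbZ$-module, and likewise $H_* = G_*$ is a $\bbZ$-module; this gives $(\alpha)$. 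For $(\beta)$, $h_* = \id_{G_*}$ is a $\bbZ$-module endomorphism of $G_*$, and it is nonzero: since $0 < \mathbf k < \omega$ and $\omega_1 > 0$, the summand $G^*_{0,0} \cong \bbZ/p\bbZ$ is a nonzero direct summand of $G_*$, so $G_* \ne 0$ and $\id_{G_*} \ne 0$. Therefore $\Xi^1_p$ is a $\mathbf d^1_p$-problem. As is customary one tacitly replaces $\Xi^1_p$ by a set of representatives under isomorphism, which is harmless since up to isomorphism $\Xi^1_p$ consists of the single triple $(G_*, G_*, \id_{G_*})$; this keeps it a \emph{set}, as Definition \ref{d57n} asks.

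There is no real obstacle here: the argument is pure bookkeeping, and the only places a reader might pause are the well-definedness of the indexing $G^*_{m,\omega\beta+n}$ and the (immediate) observation that $G_* \ne 0$, both handled above. For later use one may also record that $G_* \in \mathbf K_p^{[rs]}$, being a direct sum of finite cyclic $p$-groups, and that $|G_*| = \aleph_1$, since $\mathbf k$ is finite while there are $\aleph_1$ nonzero finite summands.
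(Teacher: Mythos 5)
Your proposal is correct and matches the paper, whose entire proof of this fact is the single line ``This is routine''; you have simply written out the routine verification of Definitions \ref{d54n} and \ref{d57n} in full, including the well-definedness of the indexing $\alpha=\omega\beta+n$ and the nontriviality of $\id_{G_*}$. Nothing further is needed.
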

\begin{proof}
This is routine.
\end{proof}Now, we are ready to formulate the main result of this section:
\begin{theorem}
\label{d311}
Let $0<\mathbf{k}<\omega$ be given, and assume
	$J =J_{ \aleph_1}^{\bd}$ is the ideal of bounded subsets of $\omega_1$. Then $(\aleph_1, J)$
 fits the triple $(\aleph_1, \mathbf d^1_p, \Xi^1_p)$.
\end{theorem}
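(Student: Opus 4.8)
The plan is to verify clauses (A) and (B) of Definition~\ref{d60n}(1) for the pair $(\bar\partial,\bar J)=(\langle\omega_1:m<\mathbf k\rangle,\langle J^{\bd}_{\omega_1}:m<\mathbf k\rangle)$ and the triple $(\aleph_1,\mathbf d^1_p,\Xi^1_p)$. Clause (A) I expect to be routine: $\omega_1$ is a limit ordinal, $J^{\bd}_{\omega_1}$ is an ideal on $\omega_1$, and that $(\aleph_1,\mathbf d^1_p,\Xi^1_p)$ is a module problem follows from Fact~\ref{d67n1} together with the trivial estimates $|G_*|=|H_*|=\aleph_1$, $|\Xi^1_p|\le\aleph_1$, $|\cM_{\mathbf d^1_p}|=\aleph_0$, $M^*_{\mathbf d^1_p}=0$. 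All the content is in clause (B).

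So fix data as in~\ref{d60n}(B): a triple $(G_*,G_*,\id_{G_*})\in\Xi^1_p$, a semi-$\mathbf d^1_p$-free module $M_*$ (note $M^*_{\mathbf d^1_p}=0$), $h_0\in\Hom(G_*,M_*)$, modules $M_{m,i}$, $G_0=\bigoplus_{m,i}M_{m,i}\oplus M_*$, and $h_1\in\Hom(G_0,G_*)$ with $(h_1\restriction M_*)\circ h_0=\id_{G_*}$. I would start from two observations. First, because $\id_{G_*}$ is injective, $h_0$ is an embedding split by $h_1\restriction M_*$, so $h_0(G_*)$ is a direct summand of $M_*$, hence of $G_0$. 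Second, $G_*$ is an \emph{unbounded} $p$-group: for $m=0$ the summands $G^*_{0,k-1}\cong\mathbb Z/p^k\mathbb Z$ $(k\ge1)$ form a direct summand $B'=\bigoplus_{k\ge1}\langle b_k\rangle$ of $G_*$ with $\ord(b_k)=p^k$, and pushing this through $h_0$ shows $B'':=h_0(B')=\bigoplus_{k\ge1}\langle b''_k\rangle$ ($b''_k:=h_0(b_k)$, $\ord(b''_k)=p^k$) is a direct summand of $G_0$.

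The construction of $G_1$ then proceeds by adjoining a ``branch'' inside the torsion-completion. Let $\overline{G_0}$ be the torsion-completion of the separable $p$-group $G_0$; since $B''$ splits off $G_0$, its torsion-completion $\overline{B''}$ sits inside $\overline{G_0}$ with $\overline{B''}\cap G_0=B''$ (torsion-completion commuting with finite direct sums). For $k\ge1$ I set $g_k:=\sum_{n\ge k}p^{\,n-k}b''_n\in\overline{B''}$ — a Cauchy sum of elements of order dividing $p^k$, hence a genuine torsion element — and a direct computation gives $p g_1=0$, $p g_{k+1}=g_k-b''_k$ for $k\ge1$, and $g_1\notin B''$ (its $b''_n$-component $p^{\,n-1}b''_n$ is nonzero for every $n$, so $g_1$ has infinite support). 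Then $G_1:=\langle G_0\cup\{g_k:k\ge1\}\rangle\le\overline{G_0}$ is an abelian group properly extending $G_0$ by countably many elements, so $|G_1|=|G_0|$, which is the required bound (the relevant $\theta$ here being $|R|^+ + \sup_m\partial_m^+=\aleph_2$ under the convention for the suppressed $\theta$, so $|G_1|=|G_0|\le\aleph_1<\theta$).

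Finally, to see $h_1$ does not extend, suppose $f\in\Hom(G_1,G_*)$ with $f\restriction G_0=h_1$. Applying $f$ to the relations, using $h_1\circ h_0=\id_{G_*}$ (so $f(b''_k)=h_1(h_0(b_k))=b_k$), gives $p f(g_1)=0$ and $p f(g_{k+1})=f(g_k)-b_k$, hence by iteration $f(g_1)=p^k f(g_{k+1})+\sum_{j=1}^{k}p^{\,j-1}b_j$ for every $k\ge1$. Applying the projection $\pi_k\colon G_*\twoheadrightarrow\langle b_k\rangle\cong\mathbb Z/p^k\mathbb Z$ (legitimate since $B'$ is a summand of $G_*$), the term $p^k f(g_{k+1})$ dies modulo $p^k$, so $\pi_k(f(g_1))=p^{\,k-1}b_k\ne0$ for all $k\ge1$, contradicting that the fixed element $f(g_1)\in G_*$ has finite support. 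Hence no such $f$ exists, $G_1$ witnesses~\ref{d60n}(B), and $(\aleph_1,J)$ fits $(\aleph_1,\mathbf d^1_p,\Xi^1_p)$. I expect the main obstacle to be recognizing the right construction — identifying that the obstruction to extending $\id_{G_*}$ is exactly the unboundedness of $G_*$ (the $p$-primary analogue of ``$\mathbb Z$ is not a direct summand of $\widehat{\mathbb Z}$'') and hence that a single countable torsion-completion branch suffices; the remaining points (the two splittings and $\overline{B''}\cap G_0=B''$) are routine.
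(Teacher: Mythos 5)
Your proof is correct, but it takes a genuinely different route from the paper's. You treat the obstruction as the classical fact that an unbounded direct sum of cyclic $p$-groups is not a direct summand of its torsion-completion: you split $h_0(G_*)$ off $G_0$ using the retraction $h_1\rest M_*$, adjoin the countably many elements $g_k=\sum_{n\ge k}p^{n-k}b''_n$ of a single completion branch, and kill any extension of $h_1$ by projecting $f(g_1)=p^kf(g_{k+1})+\sum_{j\le k}p^{j-1}b_j$ onto each cyclic summand $\langle b_k\rangle$ of the target — all steps check out, and the argument is shorter and avoids any uncountable combinatorics. The paper instead builds a $G_1$ of size $\aleph_1$ from generators $y^1_{\rho_\alpha,n},y^2_{\rho}$ attached to $\omega_1$ pairwise distinct branches $\rho_\alpha$, and refutes an extension $\hat h$ by two applications of Fodor's lemma to find $\beta<\alpha$ with agreeing branch initial segments and matching bounds $n_*$, then subtracting the two telescoped identities. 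What the heavier construction buys is that $G_1$ and $G_1/G_*$ stay $(\aleph_1,\mathbf K_p)$-free, i.e.\ it keeps open the road toward the ``freely fits'' strengthening; your $G_1$ cannot do this, since $G_1/G_0\cong\bbZ(p^\infty)$ is divisible while $G_1$ is reduced, so $G_1$ is never $\mathbf d^1_p$-free over $G_0$. For the theorem as literally stated (``fits'', which only demands clauses $(\alpha)$--$(\gamma)$ of Definition~\ref{d60n}(1)(B)) your construction suffices; but the freeness the paper's construction carries is what the downstream applications (Theorem~\ref{d64n}(2) and Corollary~\ref{d71}, which need $\Hom(G,F)=0$) implicitly rely on, so be aware that your simpler $G_1$ would not support those.
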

\begin{proof}
By Fact \ref{d67n1}, $\mathbf d^1_p$ is a module parameter,
and also $\Xi^1_p$ is a $\mathbf d^1_p$-problem, i.e.,   Definition \ref{d60n}(A) is satisfied. In order to check the property presented in its clause (B), we recall  that
$$G_*\cong \bigoplus\{ \mathbb{Z}x_{m, \alpha}: m < \bar{k}, \alpha < \omega_1            \},$$
where $\mathbb{Z}x_{m, m,\omega\alpha+n}:=\mathbb{Z}/ p^{n+1} \mathbb{Z}$. This is well-defined because   any ordinal less than $\omega_1$ is of the form $\omega\alpha+n$ for unique ordinals  $n<\omega$ and $\alpha$.
In other words, $\ord(x_{m, \omega\alpha+n})=p^{n+1}$.
Toward defining $G_1$, let $\rho_\alpha \in$$^{\omega}\alpha$, for $\alpha < \omega_1$ be increasing and pairwise distinct.
Then $G:=G_1$ will be the abelian group generated by
$$\{x_{m,\alpha}:\alpha < \aleph_1,m < \mathbf k\} \cup
\{y^1_{\rho_\alpha,n}:\alpha < \omega_1,n < \omega\} \cup \{y^2_\rho:
\rho \in {}^{\omega >}2\}$$ freely except the equations:

\begin{enumerate}
	\item[$(*)^1_{\alpha,n}$]  $\quad\quad\quad p y^1_{\rho_\alpha,n+1} = y^1_{\rho_\alpha,n} -
	y^2_{\rho_\alpha \rest n} - \sum\limits_{m < \mathbf k}
	x_{m,\omega \cdot \alpha +n},$
	\item[$(*)^2_{\alpha,n}$]  $\quad\quad\quad p^{n+1} y^1_{\rho_\alpha,n}=0= p^{n+1} y^2_{\rho_\alpha \restriction n}$,
\end{enumerate}
where $\alpha < \omega_1, n<\omega$ with the convenience that $\ord(y^1_{\rho_\alpha,n})=\ord(y^2_{\rho_\alpha \restriction n})=p^{n+1}$. In particular,
$p^{n} y^1_{\rho_\alpha,n}\neq0\neq p^{n} y^2_{\rho_\alpha \restriction n}$.

According to its definition,  $H_*$ and
$G_*$ are free with respect to $\mathbf{K}_p$. Here, we are going to show  $G_* \subseteq G$, and $G$ is $(\aleph_1,\mathbf{K}_p)$-free.
Indeed, let $G' \subseteq G$ be  a countable subgroup. Recall from
$(*)^1_{\alpha,n}$ that $\{y^2_\rho:
\rho \in {}^{\omega >}2\}$ can be drive from other terms.
Then there is an $\alpha < \omega_1$  such that
$$G' \subseteq \langle \{x_{m, \beta}: m < \bold{k}, \beta < \omega\cdot \alpha   \} \cup \{y^1_{\beta, n}: \beta < \omega\cdot \alpha, n<\omega     \}      \rangle_G.$$This gives us a generating set for $G'$. Also, recall that
 the only relations on these generators of $G'$ involved in $\{x_{m, \beta}: m < \bold{k}, \beta < \omega\cdot \alpha   \} \cup \{y^1_{\beta, n}: \beta < \omega\cdot \alpha, n<\omega     \}$ are those coming from $(*)^2_{\alpha,n}$. Combining these, it turns out that $G'$ is $\mathbf{K}_p$-free.
In other words, $G$ is $(\aleph_1,\mathbf{K}_p)$-free.
In the same vein we observe that
$G/G_*$ is $(\aleph_1,\mathbf{K}_p)$-free.

Let $g: \beta < \omega\cdot \alpha \rightarrow \omega$ be such that
$\langle  \rho_\beta \upharpoonright g(\beta): \beta < \omega\cdot \alpha    \rangle$ are pairwise
$\unlhd$-incomparable.
Therefore, things are reduced to showing that
there is no homomorphism $h \in \Hom(G, H_*)$, extending $h_*=\id_{G_*}$.
Suppose by the way of contradiction  that there is an $\hat{h} \in \Hom(G  ,G_*)$ such that the following diagram commutes
$$\xymatrix{
	& 0 \ar[r]&G_*\ar[r]^{\subseteq}\ar[d]_{=}&G\ar[dl]^{ \hat{h}}\\
	&& G_*
	&&&}$$
For each ordinal $\beta$,  we look at  $L_\beta:=\bigoplus\{ \mathbb{Z}x_{m, \beta+n}: m < \bold{k}, n< \omega  \}.$  Then $L_\beta$ is countable, and there is a projection $\pi_\beta$ from $H_*=G_*$ onto
$L_{\beta}.$
For each countable ordinal $\beta$ set $\tilde h_\beta:=\pi_\beta \circ (\hat{h}\rest)
$:$$\xymatrix{
	&     G_*\ar[r]^{\pi_\beta}\ar[d]_{\tilde h_\beta}&L_\beta\ar[dl]^{\hat{h}\rest }\\
 & G_*,
	&&&} $$

Recall that
$L_\beta$ is of countable size,  for these ordinals.
For every  $n< \omega$ and  $\alpha < \omega_1$, we bring the following claim
\begin{enumerate}
	\item[$(\ast)$]$
	y^1_{\alpha, 0}=p^n y^1_{\alpha, n+1}+  \sum_{i \leq n}p^i y^2_{\rho_\alpha \restriction i}+  \sum_{i \leq n}\sum_{m < \bold k} p^i x_{m, \omega\alpha+i}.
$\end{enumerate}
Indeed, we proceed by induction on $n$. For $n=0$ this is clear. Assume it holds for $n$, then we have
\[\begin{array}{ll}
y^1_{\alpha, 0} &= p^{n+1} y^1_{\alpha, n+1}+  \sum_{i\leq n}p^i y^2_{\rho_\alpha \restriction i}+  \sum_{i \leq n}\sum_{m < \bold k} p^i x_{m, \omega\alpha+i} \\
&= p^{n+1} [p y^1_{\alpha, n+2}+y^2_{\rho_\alpha \restriction n+1}+\sum_{m < \bold k}  x_{m, \omega\alpha+n+1}]\\
&\quad+  \sum_{i\leq n}p^i y^2_{\rho_\alpha \restriction i}+  \sum_{\leq n}\sum_{m < \bold k} p^i x_{m, \omega\alpha+i}\\
&=p^{n+2}y^1_{\alpha, n+2} +  \sum_{i \leq n+1}p^i y^2_{\rho_\alpha \restriction i}+  \sum_{i \leq n+1}\sum_{m < \bold k} p^i x_{m, \omega\alpha+i},  \end{array}\]
as claimed by $(\ast)$.

Applying $\tilde h_\beta$ on both sides of $(\ast)$, and noting that $\tilde h_\beta$ is identity on $x_{m, \omega\beta+i}$'s,  we lead to:
\begin{enumerate}
	\item[$(*)_{1}$] For every  ordinal $\beta<\omega_1,$ $n< \omega$
	\[
	\tilde h_\beta(y^1_{\beta , 0})=p^{n+1} \tilde h_\beta(y^1_{\beta , n+1})+  \sum_{i \leq n}p^i \tilde h_\beta(y^2_{\rho_{\beta } \restriction i})+ \sum_{i \leq n}\sum_{m < \bold k} p^i x_{m, \omega\beta+i}.
	\]
\end{enumerate}
Note that for each $\beta<\omega_1,$ there is some $n_\beta < \omega$ such that
\[
\tilde h_\beta(y^1_{\beta , 0}) \in L_\beta \restriction n_\beta = \bigoplus\{ \mathbb{Z}x_{m, \beta+n}: m < \bold{k}, n<  n_\beta  \}.
\]
Thanks to Fodor's lemma, we can find some $n_*< \omega$ such that
the set
\[
S_1= \{ \beta < \omega_1:  n_\beta=n_* \}
\]
is stationary in $\omega_1$. Again, according to Fodor's lemma, we can find some $\rho$ such that the set
\[
S_2=\{\beta \in S_1: \rho_\beta \restriction n_*+1 =\rho  \}
\]
is stationary.  Take some $\beta < \alpha$ in $S_2$ with $\omega\beta=\beta$ and $\omega\alpha=\alpha$.  By $(*)_1$ applied to $\beta, \alpha$ and $m$ we have:
\begin{enumerate}
	\item[$(**)_{1}$] $\tilde h_\beta(y^1_{\beta , 0})=p^{n_*+1} \tilde h_\beta(y^1_{\beta , n_*+1})+  \sum_{i \leq n_*}p^i \tilde h_\beta(y^2_{\rho_{\beta } \restriction i})+ \sum_{i \leq n_*}\sum_{m < \bold k} p^i x_{m, \omega\beta+i}.$

\item[$(**)_{2}$] $\tilde h_\alpha(y^1_{\beta , 0})=p^{n_*+1} \tilde h_\alpha(y^1_{\alpha , n_*+1})+  \sum_{i \leq n_*}p^i \tilde h_\beta(y^2_{\rho_{\alpha } \restriction i})+ \sum_{i \leq n_*}\sum_{m < \bold k} p^i x_{m, \omega\alpha+i}.$
\end{enumerate}
Let us now consider the following two projections:
\begin{itemize}
	\item[] $\pi_{\beta, n_*}: L_\beta \longrightarrow L_\beta (n_*):=\bigoplus\{ \mathbb{Z}x_{m, \beta+n_*}: m < \bold{k} \}$,
	\item[] $\pi_{\alpha, n_*}: L_\beta \rightarrow L_\alpha(n_*)$.
\end{itemize}

Set
$$\tilde{h}:=(\tilde h_\beta \circ \pi_{\beta, n_*}) \oplus (\tilde h_\alpha \circ \pi_{\alpha, n_*}): G \longrightarrow L_{\beta}(n_*) \oplus L_{\alpha}(n_*).$$
Since we know the kernel of the projections, and since  $\hat{h}\rest G_\ast=\id$, we deduce the following:
\begin{itemize}
\item $\tilde{h}(y^1_{\beta , 0})=\tilde{h}(y^1_{\alpha , 0})=0,$
\item $ \tilde h(x_{m, \beta+i})=\tilde h(x_{m, \alpha+i})=0$ for all $i< n_*$,
\item $\tilde h(x_{m, \beta+n_*})=x_{m, \beta+n_*}$ and   $\tilde h(x_{m, \alpha+n_*})=x_{m, \alpha+n_*}$.\end{itemize}
Also, recall that
\begin{itemize}
\item  $\rho_\beta \upharpoonright i =\rho_\alpha \upharpoonright i$ for all $i \leq n_*$.\end{itemize}
Thus, by subtracting the equations $(**)_{1}$ and $(**)_{2}$ and by plugging these bullets, we lead to the following equality viewed  in   $L_{\beta}(n_*) \oplus L_{\alpha}(n_*)$:
\begin{itemize}
\item[$(\dagger)$:]  $p^{n_*+1}\tilde{h}(y^1_{\beta , n_*+1}-y^1_{\alpha , n_*+1}) = -p^{n_*} (x_{m, \beta+n_*}-x_{m, \alpha+n_*}) .$\end{itemize}Recall that
 $\ord(x_{m, \beta+n_*}-x_{m, \alpha+n_*})=p^{n_*+1}.$
Hence $p^{n_*+1}\tilde{h}(y^1_{\beta , n_*+1}-y^1_{\alpha , n_*+1}) \neq 0, $ because the  right hand side of $(\dagger)$ is nonzero. Since $$L_{\alpha}(n_*)=\bigoplus\{ \mathbb{Z}x_{m, \beta+n_*}: m < \bold{k} \},$$so we deduce, from the first paragraph of the current proof, that any of its element is annihilated by $p^{n_\ast+1}$. This implies that
  $$p^{n_*+1}\tilde{h}(y^1_{\beta , n_*+1}-y^1_{\alpha , n_*+1}) = 0,$$
a contradiction, and so
  there is no homomorphism $\hat{h} \in \Hom(G, H_*)$, extending $h_*$.
\end{proof}

Now, we are ready to confirm
Problem  \ref{p1.2} in the following sense:
\begin{corollary}
\label{d71}Let $0<\mathbf{k}<\omega$ be given and let $p$ be a prime number.    There is an abelian group $G$   equipped with the following two properties:

\begin{enumerate}
	\item[$(a)$]  $G$ is   $\aleph_{\omega_1
		\cdot \mathbf k}$-free
	with respect to $\mathbf{K}_p$,
	
	\item[$(b)$]  $\Hom(G,F) = 0$ for all indecomposable $\mathbf {K}_p$-free groups $F$.
\end{enumerate}

\end{corollary}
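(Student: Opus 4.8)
The plan is to feed the $\omega_1$-level fitting result, Theorem~\ref{d311}, into the general machine of Theorem~\ref{d64n}, after lifting the underlying combinatorial parameter to the desired level of freeness by a multi black box. I begin by reading off what $\mathbf d^1_p$-freeness means: since $\mathbf d^1_p = \langle \bbZ, 0, \{\bbZ/p^n\bbZ : n \ge 1\}, \aleph_1\rangle$, a $\mathbf d^1_p$-free module is precisely a direct sum of finite cyclic $p$-groups, i.e.\ a $\mathbf K_p$-free group, and --- as a subgroup of such a direct sum is again of the same form --- a $(\mathbf d^1_p, \sigma)$-free module is in particular $(\sigma, \mathbf K_p)$-free. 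So it will suffice to build a group $G$ that is $(\mathbf d^1_p, \aleph_{\omega_1 \cdot \mathbf k})$-free and satisfies $\Hom(G, H_*) = 0$ for every target $H_*$ of the problem $\Xi^1_p$, and then to reduce arbitrary indecomposable $\mathbf K_p$-free targets to these.

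Next I would invoke the $\ZFC$ existence results for multi black boxes carrying freeness, \cite[1.20 and 1.25]{Sh:1028} (recalled in Discussion~\ref{exmbb}), to obtain a combinatorial $(\omega_1, \mathbf k)$-parameter $\mathbf x$ with $\bar J_{\mathbf x} = \langle J^{\bd}_{\omega_1} : m < \mathbf k\rangle$ which is $\aleph_{\omega_1 \cdot \mathbf k}$-free, has $|\Lambda_{\mathbf x}| = \aleph_{\omega_1 \cdot \mathbf k}$, and carries a $\chi$-black box for a suitable $\chi \ge \aleph_1$. The inequality $\chi \ge \aleph_1$ makes $(\chi, \mathbf d^1_p, \Xi^1_p)$ a module problem (the groups $G_*$ in $\Xi^1_p$ have size $\aleph_1$ and everything else in sight is countable), and as $\theta_{\mathbf d^1_p} = \aleph_1$ and $R_{\mathbf d^1_p} = \bbZ$, the side condition $\chi^+ \ge \theta + |R|^+$ is automatic. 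On the other hand, Theorem~\ref{d311} --- whose proof in fact verifies that $G$ and $G/G_*$ are $(\aleph_1, \mathbf K_p)$-free, which is exactly clause $(\delta)$ of Definition~\ref{d60n} --- shows that $(\aleph_1, J^{\bd}_{\aleph_1})$ \emph{freely} fits $(\aleph_1, \mathbf d^1_p, \Xi^1_p)$. Since $\bar\partial_{\mathbf x}$ and $\bar J_{\mathbf x}$ are the constant length-$\mathbf k$ sequences with values $\omega_1$, respectively $J^{\bd}_{\omega_1}$, Definition~\ref{d60n}(3) then gives that $\mathbf x$ freely $\aleph_1$-fits $(\chi, \mathbf d^1_p, \Xi^1_p)$.

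Now I would apply Theorem~\ref{d64n} to $\mathbf x$ and $(\chi, \mathbf d^1_p, \Xi^1_p)$. Part~(1) yields a construction $\gx$ with $G := G_{\gx}$ an abelian group of cardinality $|\Lambda_{\mathbf x}| = \aleph_{\omega_1 \cdot \mathbf k}$; part~(2), which is available because $\mathbf x$ freely fits the triple and is $\aleph_{\omega_1 \cdot \mathbf k}$-free, adds that $G$ is $\aleph_{\omega_1 \cdot \mathbf k}$-free as a $\mathbf d^1_p$-module --- hence $\aleph_{\omega_1 \cdot \mathbf k}$-free with respect to $\mathbf K_p$, which is clause~(a) --- and that $\Hom(G, H_*) = 0$ whenever $(G_*, H_*, h_*) \in \Xi^1_p$. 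Unwinding $\Xi^1_p$, the latter says $\Hom(G, G_*) = 0$ for every group $G_* = \bigoplus\{G^*_{m,\alpha} : m < \mathbf k,\ \alpha < \omega_1\}$ with $G^*_{m, \omega\alpha + n} \cong \bbZ/p^{n+1}\bbZ$.

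It remains to obtain clause~(b) from this. An indecomposable $\mathbf K_p$-free group $F$ is, being a direct sum of finite cyclic $p$-groups, necessarily isomorphic to $\bbZ/p^n\bbZ$ for some $n \ge 1$; and $\bbZ/p^n\bbZ \cong G^*_{0, n-1}$ is a direct summand of the group $G_*$ above, say via an inclusion $\iota$ and a projection $\pi$ with $\pi \circ \iota = \id$. If some $h \in \Hom(G, \bbZ/p^n\bbZ)$ were nonzero, then $\iota \circ h \in \Hom(G, G_*)$ would be nonzero (it has $\pi \circ \iota \circ h = h$), contradicting the preceding paragraph; hence $\Hom(G, F) = 0$. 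The only genuinely hard ingredient is the construction of $\mathbf x$ --- one needs the multi black box of \cite{Sh:1028} to live on the index set $\omega_1$ with the bounded ideal while simultaneously being $\aleph_{\omega_1 \cdot \mathbf k}$-free and having $|\Lambda_{\mathbf x}| = \aleph_{\omega_1 \cdot \mathbf k}$ --- together with the (merely technical) point that the proof of Theorem~\ref{d311} really delivers \emph{free} fitting rather than mere fitting; everything else is a direct application of Theorem~\ref{d64n} and elementary bookkeeping.
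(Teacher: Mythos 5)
Your proposal follows the same route as the paper's own (very terse) proof: obtain an $\aleph_{\omega_1\cdot\mathbf k}$-free combinatorial parameter carrying a $\chi$-black box from Discussion~\ref{exmbb}, feed the fitting result of Theorem~\ref{d311} into Theorem~\ref{d64n}, and read off clauses (a) and (b). You fill in two points the paper leaves implicit --- that $(\mathbf d^1_p,\sigma)$-freeness yields $(\sigma,\mathbf K_p)$-freeness, and that an indecomposable $\mathbf K_p$-free group is some $\bbZ/p^n\bbZ$, hence a direct summand of the $H_*=G_*$ occurring in $\Xi^1_p$, so that $\Hom(G,H_*)=0$ really does transfer to clause (b) --- and both are correct and genuinely needed, since Theorem~\ref{d64n}(2)(f) only controls the targets actually listed in $\Xi$.

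The one step you should not wave through is the claim that the proof of Theorem~\ref{d311} establishes \emph{free} fitting because it shows $G$ and $G/G_*$ are $(\aleph_1,\mathbf K_p)$-free, ``which is exactly clause $(\delta)$ of Definition~\ref{d60n}.'' It is not: clause $(\delta)$ demands that for every $m<\mathbf k$ and every $u\in J_m$ the module $G_1$ split as the submodule generated by $M_*$ together with the $M_{\ell,i}$ for which $\ell\ne m$ or $i\in u$, direct sum a semi-$\mathbf d$-free complement --- a specific decomposition over each coordinate-deleted submodule, not merely the assertion that countable subgroups are $\mathbf K_p$-free. Note that the paper itself states and invokes only plain fitting in Theorem~\ref{d311} and in the proof of the corollary, whereas both conclusions of the corollary come from part (2) of Theorem~\ref{d64n}, whose hypothesis is \emph{free} fitting; so this verification is not optional bookkeeping but the actual missing link. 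Your instinct that the explicit generators-and-relations construction in the proof of Theorem~\ref{d311} does satisfy $(\delta)$ (after deleting a bounded set of indices in one coordinate, the remaining $x_{m,\alpha}$, $y^1$, $y^2$ generate freely modulo the order relations, giving a semi-free complement) is sound, but it must be checked against the definition rather than identified with $(\aleph_1,\mathbf K_p)$-freeness.
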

\begin{proof}
Given $\mathbf{k}$, we use Discussion \ref{exmbb} to find a combinatorial $\bar{\partial}$-parameter $\mathbf{x}$ which is $\aleph_{\omega_1 \cdot \mathbf{k}}$-free and equipped with a $\chi$-black box, where $\chi := |R| + \aleph_1$ and $J :=J_{ \aleph_1}^{\bd}$. In view of Theorem \ref{d311}, $\mathbf{x}$ $\aleph_1$-fits the triple $(\aleph_1, \bold d^1_p, \Xi^1_p)$.
In order to get the desired conclusion, it remains to apply Theorem \ref{d64n}.
\end{proof}

\begin{corollary}\label{d73}
Let $0<\mathbf{k}<\omega$ and let $p$ be a prime number.  Then there is a group $G $ equipped with the following two properties:

\begin{enumerate}
\item[$(a)$]  if $(G_*, H_*, h_*) \in \Xi^0_p$,  then  every $h \in \Hom(G,H_*)$ is small.

\item[$(b)$]  $G$ is $\aleph_{\omega \cdot \mathbf k}$-free with respect to $\mathbf {K}_p $.
\end{enumerate}
\end{corollary}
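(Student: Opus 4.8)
The plan is to imitate the proof of Corollary~\ref{d71}, but with the ``$\aleph_0$-layer'' data $(\mathbf d^0_p,\Xi^0_p)$ in place of the ``$\aleph_1$-layer'' data $(\mathbf d^1_p,\Xi^1_p)$, so that the product construction of black boxes only has to climb to $\aleph_{\omega\cdot\mathbf k}$. Concretely, given $\mathbf k$ and the prime $p$, first I would invoke Discussion~\ref{exmbb} (i.e.\ the ZFC existence statements \cite[1.20, 1.25]{Sh:1028}) to fix a combinatorial $\bar\partial$-parameter $\mathbf x$ with $\bar\partial_{\mathbf x}$ constantly $\aleph_0$ and $\bar J_{\mathbf x}=\langle J^{\bd}_{\aleph_0}:\ell<\mathbf k\rangle$, such that $\mathbf x$ is $\aleph_{\omega\cdot\mathbf k}$-free (Definition~\ref{a12}) and carries a $\chi$-black box; here one may take $\chi:=2^{\aleph_0}$ after replacing $\Xi^0_p$ by a set of representatives of its countable members (which is all that the argument below will actually use), so that $(\chi,\mathbf d^0_p,\Xi^0_p)$ is a module problem and $\chi^+\ge\theta_{\mathbf d^0_p}+|\bbZ|^+$.

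By Fact~\ref{d67n} and Proposition~\ref{d31}, the pair $(\aleph_0,J^{\bd}_{\aleph_0})$ freely fits the triple $(\aleph_0,\mathbf d^0_p,\Xi^0_p)$; combined with the $\chi$-black box this means $\mathbf x$ freely $\aleph_1$-fits that triple in the sense of Definition~\ref{d60n}. Now apply Theorem~\ref{d64n}. Part~(1) yields a construction $\gx$ with $G:=G_{\gx}$ such that for every $(G_*,H_*,h_*)\in\Xi^0_p$ and every embedding $h_0\in\Hom(G_*,G)$ there is no $h_1\in\Hom(G,H_*)$ with $h_0\circ h_1=h_*$. Part~(2)(e), applied with $\sigma:=\aleph_{\omega\cdot\mathbf k}$ (which $\mathbf x$ is), says that $G$ is $\sigma$-free; since $\mathbf d^0_p$-freeness is exactly ``being a direct sum of finite cyclic $p$-groups'', this gives that every subgroup of $G$ of cardinality $<\aleph_{\omega\cdot\mathbf k}$ is $\mathbf K_p$-free, i.e.\ clause~(b). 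In particular $G$ is a (nonzero) reduced separable $p$-group.

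It remains to deduce clause~(a), and here the point is a localization step feeding Theorem~\ref{d64n}(1). Fix $(G_*,H_*,h_*)\in\Xi^0_p$ and $h\in\Hom(G,H_*)$, and assume toward a contradiction that $h$ is not small. By Definition~\ref{d52n} there is a fixed $k$ such that for each $n$ we may choose $a_n\in p^nG\cap G[p^k]$ with $h(a_n)\ne 0$; pick also $b_n\in G$ with $p^nb_n=a_n$ and set $G':=\langle a_n,b_n:n<\omega\rangle\subseteq G$. Then $G'$ is a countable subgroup of $G$, hence $\mathbf K_p$-free, hence reduced and separable; since $a_n$ retains height $\ge n$ inside $G'$, the group $G'$ is unbounded and so not torsion-complete, and $|G'|=\aleph_0\le 2^{\aleph_0}$. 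Moreover $h\restriction G'$ is not small (witnessed by the $a_n$). Thus $(G',H_*,h\restriction G')\in\Xi^0_p$, the inclusion $G'\hookrightarrow G$ is an embedding, and $h$ extends $h\restriction G'$ along it; taking $h_0$ to be this inclusion and $h_1:=h$ contradicts Theorem~\ref{d64n}(1)(c). Hence every $h\in\Hom(G,H_*)$ is small, which is clause~(a).

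I expect the substance to be bookkeeping rather than a new idea, with two delicate points. On the set-theoretic side one must check that the product-of-trees machinery of \cite{Sh:1028} really delivers a parameter of exactly the required shape ($\mathbf k$ factors of ``type $\aleph_0$'', bounded ideals, $\aleph_{\omega\cdot\mathbf k}$-freeness, and a $\chi$-black box) --- this is the step that ``spends'' the cardinal $\aleph_{\omega\cdot\mathbf k}$, and it is exactly what Discussion~\ref{exmbb} is about. On the algebraic side the localization step needs care: non-smallness of $h$ must be captured inside a \emph{countable} subgroup \emph{together with the height witnesses} $b_n$, so that the $a_n$ keep height $\ge n$ in $G'$; and one must verify that $G'$ genuinely meets all three clauses defining $\Xi^0_p$ --- membership in $\mathbf K_p^{[rs]}$, cardinality $\le 2^{\aleph_0}$, and failure of torsion-completeness (which is where unboundedness of $G'$ enters).
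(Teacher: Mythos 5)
Your proposal is correct and follows essentially the same route as the paper: obtain an $\aleph_{\omega\cdot\mathbf k}$-free combinatorial $\bar\partial$-parameter with a $\chi$-black box via Discussion~\ref{exmbb}, feed it Proposition~\ref{d31} (the free $\aleph_1$-fitting of $(\aleph_0,\mathbf d^0_p,\Xi^0_p)$), and apply Theorem~\ref{d64n}. The paper's printed proof is only three sentences and leaves the reduction of clause~(a) to Theorem~\ref{d64n}(1)(c) implicit; your explicit localization to a countable unbounded subgroup $G'=\langle a_n,b_n:n<\omega\rangle$ carrying the non-smallness witnesses, and your adjustment of $\chi$ and of $\Xi^0_p$ to a set of (countable) representatives so that $(\chi,\mathbf d^0_p,\Xi^0_p)$ is genuinely a module problem, supply details the paper glosses over.
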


\begin{proof}
Recall that we can find a combinatorial $\bar{\partial}$-parameter $\mathbf{x}$ which is $\aleph_{\omega \cdot \mathbf{k}}$-free and equipped with a $\chi$-black box, where $\bar{\partial}$ is the constant sequence $\aleph_0$ of length $\bold k$,  $\chi :=  \aleph_1$ and $J :=J_{ \aleph_0}^{\bd}$.
 Thanks to Proposition \ref{d31}, $\mathbf x$ freely $\aleph_1$-fits the triple $(\aleph_0, \bold d^0_p, \Xi^0_p)$.
	 This allows us to use Theorem \ref{d64n} to get the required group $G$.
\end{proof}
\section*{Acknowledgements}
The authors thank the referee of the paper.

\end{document}